\DeclareMathAlphabet{\mathbbold}{U}{bbold}{m}{n}
\theoremstyle{plain}
        \newtheorem{theorem}{Theorem}[section]
        \newtheorem*{theorem*}{Theorem}
        \newtheorem*{conj*}{Conjecture}
        \newtheorem{lemma}[theorem]{Lemma}
        \newtheorem{prop}[theorem]{Proposition}
\theoremstyle{definition}
        \newtheorem{rem}[theorem]{Remark}
\theoremstyle{remark}
        \newtheorem*{remark}{Remark}
\numberwithin{equation}{section}
\numberwithin{theorem}{section}
\numberwithin{table}{section}
\numberwithin{figure}{section}
\providecommand{\defn}[1]{\emph{#1}}
\renewcommand{\le}{\leqslant}
\renewcommand{\leq}{\leqslant}
\renewcommand{\ge}{\geqslant}
\renewcommand{\geq}{\geqslant}
\newcommand{\var}  {\operatorname{var}}
\newcommand{\card} {\operatorname{card}}
\newcommand{\R}{\mathbb{R}}
\newcommand{\N}{\mathbb{N}}
\providecommand{\abs}[1]{\lvert#1\rvert}
\providecommand{\norm}[1]{\|#1\|}
\renewcommand{\:}{\colon}
\newcommand{\LIP}{\operatorname{LIP}}
\newcommand{\lip}{\operatorname{lip}}
\newcommand{\E} {\mathbf{E}}
\newcommand{\CCC}{C}
\newcommand{\MMM}{\mathcal{M}}
\newcommand{\Holder}[1] {\CCC^{0,#1}}
\newcommand{\holder}[1] {c^{0,#1}}
\newcommand{\vertiii}[1]{{\left\vert\kern-0.25ex\left\vert\kern-0.25ex\left\vert #1
    \right\vert\kern-0.25ex\right\vert\kern-0.25ex\right\vert}}
\renewcommand{\=}{\coloneqq}
\newcommand{\head}{\operatorname{head}}
\newcommand{\tail}{\operatorname{tail}}
\newcommand{\len}{\operatorname{len}}
\newcommand{\BG}{\operatorname{\mathcal{BG}}}
\newcommand{\Gap}{\operatorname{Gap}}
\renewcommand{\E}{\mathbb{E}}
\renewcommand{\P}{\mathbb{P}}
\newcommand{\Lip}{\operatorname{Lip}}
\newcommand{\Lock}{\operatorname{Lock}}
\newcommand{\lock}{\operatorname{lock}}
\newcommand{\disagree}{\mathbin{\dagger}}
\newcommand{\ba}{\bm{a}}
\newcommand{\bb}{\bm{b}}
\newcommand{\be}{\bm{e}}
\newcommand{\bu}{\bm{u}}
\newcommand{\bw}{\bm{w}}
\newcommand{\bx}{\bm{x}}
\newcommand{\by}{\bm{y}}
\newcommand{\bz}{\bm{z}}
\newcommand{\cC}{\mathcal{C}}
\renewcommand{\cD}{\mathcal{D}}
\newcommand{\cE}{\mathcal{E}}
\renewcommand{\cH}{\mathcal{H}}
\newcommand{\cM}{\mathcal{M}}
\newcommand{\cQ}{\mathcal{Q}}
\renewcommand{\cR}{\mathcal{R}}
\newcommand{\cS}{\mathcal{S}}
\newcommand{\sC}{\mathscr{C}}
\newcommand{\sP}{\mathscr{P}}
\newcommand{\sU}{\mathscr{U}}
\newcommand{\ob}{\overline{b}}
\newcommand{\ow}{\overline{w}}
\newcommand{\tQ}{\widetilde{Q}}
\newcommand{\tphi}{\widetilde{\phi}}
\begin{document}
\title[Prevalence of periodicity of maximizing measures]{On the prevalence of the periodicity of maximizing measures}
\author{Jian~Ding \and Zhiqiang~Li \and Yiwei~Zhang}
\address{Jian~Ding, School of Mathematical Sciences, Peking University, Beijing 100871, China}
\email{dingjian@math.pku.edu.cn}
\address{Zhiqiang~Li, School of Mathematical Sciences \& Beijing International Center for Mathematical Research, Peking University, Beijing 100871, China}
\email{zli@math.pku.edu.cn}
\address{Yiwei~Zhang, Department of Mathematics \& SUSTech International Center for Mathematics, Southern University of Science and Technology, Shenzhen, Guangdong 518055, China}
\email{zhangyw@sustech.edu.cn}


\subjclass[2010]{Primary: 37A99; Secondary: 05C80, 37A50, 37C40, 37D35}

\keywords{ergodic optimization, maximizing measure, prevalence, (random) maximum mean cycle, ground state.}

\begin{abstract}
For a continuous map $T\: X\rightarrow X$ on a compact metric space $(X,d)$, we say that a function $f\: X \rightarrow \R$ has the property $\sP_T$ if its time averages along forward orbits of $T$ are maximized at a periodic orbit. In this paper, we prove that for the one-sided full shift on two symbols, the property $\sP_T$ is prevalent (in the sense of Hunt--Sauer--Yorke) in spaces of Lipschitz functions with respect to metrics with mildly fast decaying rate on the diameters of cylinder sets.  This result is a strengthening of \cite[Theorem~A]{BZ16}, confirms the prediction mentioned in the ICM proceeding contribution of J.~Bochi (\cite[Section~1]{Boc18}) suggested by experimental evidence, and is another step towards the Hunt--Ott conjectures in the area of ergodic optimization.
\end{abstract}

\maketitle

\tableofcontents

\section{Introduction}     \label{s:Introduction}

Mathematicians have hoped for an analog of the notions of ``Lebesgue almost every'' and ``Lebesgue measure zero'' in infinite-dimensional spaces such as various Banach spaces of functions studied in dynamics. Such a desire was recorded in the contribution of A.~N.~Kolmogorov to the 1954 International Congress of Mathematicians \cite{Ko54}.

A natural notion to fulfill this quest called \emph{prevalence} was introduced by B.~R.~Hunt, T.~Sauer, and J.~A.~Yorke \cite{HSY92}. This notion has since proven to play a central role in understanding generic behaviors in dynamics from a probabilistic (measure-theoretic) perspective. For more history, related notions, and applications in analysis, dynamics, economics, etc., we refer the readers to the surveys by B.~R.~Hunt and V.~Yu.~Kaloshin \cite{HK10} and by W.~Ott and J.~A.~Yorke \cite{OY05}.

Following \cite{HK10}, we recall the notion of prevalence as follows.

Let $V$ be a topological vector space over $\R$ equipped with a complete metric. A Borel measure $\mu$ on $V$ is called \defn{transverse} to a Borel set $B\subseteq V$ if $\mu( \{ x + v  :  x \in B \} ) = 0$ for every $v\in V$. We say that a Borel subset $B$ of $V$ is \defn{shy} if there exists a compactly supported Borel probability measure transverse to $B$. A subset $S$ of $V$ is called \emph{prevalent} (resp.\ \defn{shy}) if $V\setminus S$ (resp.\ $S$) is contained in a shy Borel subset of $V$.


Prevalence has the following properties (see for example, \cite[Section~2.1]{HK10}): it is preserved under translation and countable intersections, it implies density, and it coincides with the notion of having full Lebesgue measure when the ambient space is a finite-dimensional Euclidean space.

The aim of this paper is to investigate the prevalence of some ergodic properties in the study of dynamical systems.


Let $T\: X \rightarrow X$ be a continuous transformation. Let $f \: X \rightarrow \R$ be a continuous function, called a potential. We denote the maximal potential energy by 
\begin{equation*}\label{e:ergodicmax}
  \beta(f)\=\sup\limits_{\mu\in\MMM(X,T)}\int\! f \, \mathrm{d}\mu=\max\limits_{\mu\in\MMM(X,T)}\int\! f \, \mathrm{d}\mu.
\end{equation*}
The last identity follows from the weak*-compactness of the set $\MMM(X,T)$ of $T$-invariant Borel probability measures on $X$. The quantity $\beta(f)$ can also be expressed in terms of the maximal time average of $f$ (see for example, \cite[Proposition~2.1]{Je06}). We call a measure $\mu \in \MMM(X,T)$ that maximizes the potential energy $\int\!f \,\mathrm{d}\mu$ a \emph{measure of maximal potential energy} with respect to $f$ or a \emph{$f$-potential-energy-maximizing measure} (or a \emph{$f$-maximizing measure}). 

If one of the $f$-maximizing measures is supported on a periodic orbit of $T$, then we say that $f$ has \emph{property $\sP_T$} and call such a measure \emph{periodic}. We define the following subsets of the set $\CCC(X)$ of real-valued continuous functions:
\begin{align*}
\sP(T) & \= \{ f \in \CCC(X) : f \text{ has property } \sP_T \},\\
\sP\sU(T) & \= \{ f \in  \sP(T) : f \text{ has a unique $f$-maximizing mesure} \}.
\end{align*}

The set of functions with property $\sP_T$ has been investigated in various Banach spaces of continuous functions for many dynamical systems, see for example, \cite{Bou00, Bou01, CLT01, QS12, Co16, BZ16, HLMXZ19, LZ23}. The study of typical properties of maximizing measures for Lipschitz and H\"older functions has been particularly fruitful.

Assume that $(X,d)$ is a compact metric space with infinite cardinality. Let $\Lip(X,d)$ be the space of real-valued Lipschitz functions on $(X,d)$. A function $f \: X \rightarrow \R$ is a \emph{little Lipschitz function} or \emph{locally flat Lipschitz function} if
\begin{equation*}
	\sup\{ \abs{ f(x) - f(y) } : x,\,y \in X, \, d(x,y) \leq r \} = o(r) \qquad \text{ as } r \to 0.
\end{equation*}
The space of real-valued little Lipschitz functions on $(X,d)$ is denoted by $\lip(X,d)$ and called the \emph{little Lipschitz space}. We equip both spaces with the \emph{Lipschitz norm}
\begin{equation*}
	\norm{  f  }_{\Lip} \= \norm{ f }_{\infty} + \LIP_d(  f ) ,
\end{equation*}
i.e., the sum of the sup-norm and the minimal Lipschitz constant $\LIP_d(  f )$ of $f$ given by
$\LIP_d (f) \= \sup\{ \abs{f(x) - f(y)} / d(x,y)  :  x,\,y \in X,\, x \neq y \}$.
The snowflake $d^\alpha$ of $d$ given by $d^\alpha(x,y) \= d(x,y)^\alpha$ is also a metric for each $\alpha \in (0,1]$. The space $\Holder{\alpha}(X,d)$ of $\alpha$-H\"older continuous functions is precisely $\Lip(X,d^\alpha)$. The space $\holder{\alpha}(X,d) \= \lip (X,d^\alpha)$ is called the space of \emph{little $\alpha$-H\"older functions} by some authors.

We say that a function $f \in \Lip(X,d)$ (resp.\ $\lip(X,d)$) has the \emph{locking\footnote{This is translated from the term \emph{verrouillage} used by T.~Bousch in \cite[Secton~8]{Bou00}.} property} if it has a unique maximizing measure $\nu \in \MMM(X,T)$ that is supported on a periodic orbit of $T$ (in particular, $f$ has property~$\sP_T$) and moreover $\nu$ is the unique maximizing measure for every function in $\Lip(X,d)$ (resp.\ $\lip(X,d)$) sufficiently close to $f$ in the Lipschitz norm. The set of Lipschitz functions $f \in \Lip(X,d)$ (resp.\ $\lip(X,d)$) with the locking property is denoted by $\Lock_T (X,d)$ (resp.\ $\lock_T (X,d)$). The locking property clearly depends on the metric $d$.

The set of Lipschitz functions with the locking property is exactly the interior of the set of Lipschitz functions with property~$\sP_T$. One inclusion is obvious from the definition, and the other was shown by G.~Yuan and B.~R.~Hunt \cite[Remark~4.5]{YH99} (see also \cite{BZ15}). The same property holds for little Lipschitz functions, see \cite[Theorem~4.1]{LZ23}. In particular, the set of functions with the locking property is dense among the functions with property~$\sP_T$.

Consider the full shift $(\Sigma^+, \sigma )$ on the set $\Sigma^+ \= \{0,\,1\}^{\N}$ of binary sequences. Here the \emph{left-shift operator} $\sigma  \: \Sigma^+ \rightarrow \Sigma^+$ is given by
\begin{equation*}
\sigma  ( \{w_i\}_{i\in\N} ) = \{w_{i+1}\}_{i\in\N}  \qquad \text{for } \{w_i\}_{i\in\N} \in \Sigma^+.
\end{equation*}

For a strictly decreasing sequence $\ba = \{ a_n \}_{n\in\N_0}$ of positive numbers converging to zero, define a function $d_{\ba} \: \Sigma^+ \times \Sigma^+ \rightarrow \R$ by assigning, for each pair of distinct $\bw= \{w_n\}_{n\in\N}, \, \bz=\{z_n\}_{n\in\N} \in \Sigma^+$, 
\begin{equation}   \label{e:def_d_a}
d_{\ba} ( \bw, \bz ) \= a_m 
\end{equation}
where $m$ is the smallest positive integer with the property that $w_n \neq z_n$, and by setting $d_{\ba}  ( \bz, \bz ) \= 0$ for each $\bz \in \Sigma^+$. It is easy to see that $d_{\ba}$ is a metric on $\Sigma^+$.

We consider the  Banach spaces $\Lip(\Sigma^{+},d_{\ba})$ and $\lip(\Sigma^{+},d_{\ba})$ with respect to the metric $d_{\ba}$. Observe that the faster the sequence ${\ba}$ decays, the more regular  the functions in $\Lip(\Sigma^{+}, d_{\ba})$ are.

We are now ready to state our main theorem below.

\begin{theorem}[Main Theorem]   \label{t:prevalence}
Let $(\Sigma^+, \sigma )$ be the full shift on binary sequences.
Let $\theta$ be a number in $(0,1/4)$ and $\ba=\{a_{n}\}_{n\in\N_0}$ be a strictly decreasing sequence of positive numbers satisfying 
\begin{equation}   \label{e:decay}
\frac{ a_{n+1} }{ a_n } = O ( \theta^n ) \qquad \text{ as } n \to +\infty.
\end{equation}
Then $\sP\sU(\sigma) \cap \Lip(\Sigma^{+},d_{\ba})$ is a prevalent set in the space $\Lip(\Sigma^{+},d_{\ba})$ of real-valued Lipschitz functions, where $\sP\sU(\sigma)$ consists of continuous functions $f \: \Sigma^{+} \rightarrow \R$ with the property that there is a unique $f$-maximizing measure $\mu$ and that $\mu$ is supported on a periodic orbit of $\sigma$. Moreover, $\Lock_{\sigma}(\Sigma^{+},d_{\ba})$ is a prevalent set in $\Lip(\Sigma^{+},d_{\ba})$.
\end{theorem}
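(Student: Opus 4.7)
The plan is to prove the stronger statement that $\Lock_\sigma(\Sigma^+, d_\ba)$ itself is prevalent in $\Lip(\Sigma^+, d_\ba)$; since $\Lock_\sigma(\Sigma^+, d_\ba) \subseteq \sP\sU(\sigma) \cap \Lip(\Sigma^+, d_\ba)$ and shyness is preserved under containment, this implies the main assertion. Working with $\Lock_\sigma$ is convenient because, by the Yuan--Hunt observation recalled above, it coincides with the interior of $\sP(\sigma) \cap \Lip(\Sigma^+, d_\ba)$, so the exceptional set $\Lip(\Sigma^+,d_\ba) \setminus \Lock_\sigma$ is closed and therefore Borel. The objective reduces to constructing a compactly supported Borel probability measure $\mu$ on $\Lip(\Sigma^+, d_\ba)$ such that
\[
  \mu\bigl( \{ w \in \Lip(\Sigma^+, d_\ba) : v + w \notin \Lock_\sigma \} \bigr) = 0
  \quad \text{for every } v \in \Lip(\Sigma^+, d_\ba).
\]

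The natural probe lives in the finite-dimensional subspace $V_N \subseteq \Lip(\Sigma^+, d_\ba)$ of functions that are constant on each length-$N$ cylinder; because $d_\ba$ is in fact an ultrametric, such locally constant functions are genuinely Lipschitz, and $\dim V_N = 2^N$. Normalized Lebesgue measure on a compact box $B_N \subseteq V_N$ serves as a candidate probe, and a convex combination across $N$ (or an appeal to countable-intersection stability of prevalence) assembles them into a single probe. Fix $v$ and let $\beta_K(v+w)$ denote the maximum of $(v+w)$-time averages over periodic orbits of period at most $K$; as a function of $w \in V_N$ this is piecewise affine with at most $O(2^K)$ pieces, and for each pair of distinct periodic orbits the subset of $V_N$ on which their $(v+w)$-averages differ by at most $\eta$ is a slab of Lebesgue measure $O(\eta)$ (provided the two orbits project to distinct length-$N$ patterns, so that they give a nontrivial linear functional on $V_N$). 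On the analytic side, a classical maximum-mean-cycle argument on the de Bruijn graph of order $M$ shows that if $f$ depends only on the first $M$ coordinates, then its maximizing measure is supported on a cycle of period at most $2^M$; combined with the Lipschitz estimate $\|f - f_M\|_\infty \leq \LIP_{d_\ba}(f) \cdot a_M$ for the depth-$M$ cylinder averaging $f_M$ of $f = v + w$, this yields the key implication: if the gap between the best and second-best cycle of $\beta_K(v+w)$ exceeds a small constant multiple of $(\LIP_{d_\ba}(v) + \LIP_{d_\ba}(w)) \cdot a_M$ (for $K \geq 2^M$), then the best cycle is the unique $(v+w)$-maximizing measure among all invariant measures, and in particular $v+w \in \Lock_\sigma$.

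The main obstacle, and the place where the hypothesis $\theta < 1/4$ is essential, is the quantitative balancing of the scales $N$, $M$, and $K$. The decay $a_{n+1}/a_n = O(\theta^n)$ gives $a_n \asymp C^n \theta^{n(n-1)/2}$, i.e., super-exponential decay, and this must dominate the combinatorial explosion from the $O(4^K)$ pairs of candidate cycles and the de Bruijn state space of size $2^M$. A union bound over pairs bounds the measure of the exceptional set in $V_N$ by a quantity of order $4^K \cdot a_M$ (times factors involving $\LIP_{d_\ba}(v)$ and the diameter of $B_N$), and one must choose $M$ and $K$ as functions of $N$ so that this vanishes in the appropriate limit while the constraint $K \geq 2^M$ is preserved. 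The strict inequality $\theta < 1/4$ is precisely the regime in which this optimization succeeds; at a weaker decay rate the combinatorial and analytic growth rates would collide. Once the transversality is established inside each $V_N$, shyness of $\Lip(\Sigma^+,d_\ba) \setminus \Lock_\sigma$ follows, yielding prevalence of $\Lock_\sigma(\Sigma^+, d_\ba)$ and hence of $\sP\sU(\sigma) \cap \Lip(\Sigma^+, d_\ba)$.
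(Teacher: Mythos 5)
Your reduction to the prevalence of $\Lock_\sigma(\Sigma^+,d_{\ba})$ and your use of the gap criterion on de~Bruijn graphs are both in the spirit of the paper, but the core of your argument --- a union bound over pairs of candidate cycles --- is precisely the step that fails under the hypothesis $\theta\in(0,1/4)$. A function depending on the first $M$ coordinates is optimized on a directed cycle of $\BG_M$, and the number of such cycles is \emph{doubly} exponential in $M$ (de~Bruijn's theorem already gives $2^{2^{M-2}-M+1}$ Hamiltonian cycles), so your union bound over the $O(4^K)$ pairs with $K\geq 2^M$, each contributing a slab of width comparable to $a_M$, yields an exceptional set of measure of order $4^{2^M}a_M$. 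Since \eqref{e:decay} only gives $a_M\asymp C^M\theta^{M(M-1)/2}=e^{-O(M^2)}$, this bound diverges; it converges only under the evanescent condition $a_{n+1}/a_n=O\bigl(2^{-2^{n+2}}\bigr)$ of \cite[Theorem~A]{BZ16}, which is exactly the regime the present theorem is designed to go beyond. The missing idea is the paper's Proposition~\ref{p:conditional_layered_Gap_estimate}: an anti-concentration estimate, adapted from \cite[Section~5]{CCK15}, for the gap between the top two cycle mean-weights \emph{conditioned} on the Haar coefficients of level $\leq n-2$. By simultaneously shifting all the level-$(n-1)$ uniform variables $Y_{\bw}$, $\bw\in V(\cC)$, one compares the density of the top cycle's weight to itself after a translation and obtains $\P\bigl(\Gap_{n,\bm t}\leq 2^{-n}b_{n-1}\epsilon\bigr)\leq 2^n\epsilon$ --- a bound that is only singly exponential in $n$ because it is governed by the $2^{n-1}$ vertices, not by the cycle count. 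This is what makes the threshold $\theta<1/4$ (rather than doubly exponential decay) suffice, via Borel--Cantelli and the comparison $b_{n-1}/b_n\gtrsim 4^{n+1}n^4$.

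A second, independent gap is the structure of your probe measure. Transversality requires a \emph{single} compactly supported measure giving the translated exceptional set measure exactly zero for every $v$; your finite-dimensional boxes $B_N\subseteq V_N$ only yield exceptional sets of small positive measure for each fixed $N$, and neither a convex combination $\sum_N 2^{-N}\mu_N$ (which inherits a positive total bound) nor countable-intersection stability of prevalence (which applies to countably many prevalent sets, not to a sequence of approximate probes) converts ``small'' into ``zero.'' The paper instead puts the product of uniform measures on the full infinite-dimensional Hilbert brick $\cH_{\bb}$ with $b_n=a_n/n$ (Lemma~\ref{l:brick_def}), so that every Haar level is randomized; almost surely the gap criterion \eqref{e:gap} is met at \emph{some} finite level $n$, where the level-$n$ gap must dominate the entire tail $\sum_{k\geq n}(k-n+1)\max_{\bw}|c_{\bw}(f_0+g)|$, including the deterministic contribution of $f_0$ at all levels. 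You would need to rebuild your argument around an infinite-dimensional probe and a cycle-count-free anti-concentration bound for it to close.
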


In general, prevalence does not guarantee topological genericity, neither does the latter imply the former. However, due to the fact that $\Lock_{\sigma}(\Sigma^{+},d_{\ba})$ is known to be an open set in $\Lip(\Sigma^+,d_{\ba})$, topological genericity can be guaranteed by the density alone, which is always a direct consequence of prevalence. Therefore, Theorem \ref{t:prevalence} represents, albeit in our special setting, a strengthening of the kind of topological result obtained in \cite[Theorem~A]{Co16}.

As explained in \cite[Section~1]{Boc18}, the type of results like Theorem~\ref{t:prevalence} provide some confirmation of the experimental findings of B.~R.~Hunt and E.~Ott published over two decades ago \cite{HO96a,HO96b}. 

In the late 1990s, B.~R.~Hunt, E.~Ott, and G.~Yuan conjectured that for a typical chaotic system $T$, a typical smooth function $f$ admits a maximizing measure supported on a periodic orbit, where the typicality on the function $f$ was both in the sense of probability (in terms of the Lebesgue measure of the parameter(s) for the potential $f$ in a parameter space) (\cite[Conjecture]{HO96a} and \cite[Conjecture~2]{HO96b}) and in the sense of topology (topological genericity) (\cite[Conjecture~1.1]{YH99}). We recall that such a function $f$ has property $\sP_T$. Moreover, B.~R.~Hunt and E.~Ott observed based on numerical evidence that the set of the functions $f$ without property $\sP_T$ has fractal dimension zero. Despite the recent breakthroughs in the direction of topological genericity initiated by \cite{Co16}, the Hunt--Ott conjecture has been much less thoroughly investigated.

For a specific map $T(x)=2x$ on the circle, and a one-parameter family $\rho_\tau(x)=\cos2\pi(x-\tau)$, Corollary~1 in \cite{Bou00} shows that the maximizing measure is unique and that it is supported on a periodic orbit for every $\tau$ except on a set which has zero Lebesgue measure and zero Hausdorff dimension  (see also \cite{Je00}). Therefore, the Hunt--Ott conjecture is confirmed in this specific one-parameter family. The notion of full Lebesgue measure coincides in finite dimension with the notion of prevalence in Theorem~\ref{t:prevalence}. 

In fact, Theorem~\ref{t:prevalence} is a strengthened version of \cite[Theorem A]{BZ16} by J.~Bochi and the third-named author, where they assume the \emph{evanescent} condition on $\ba$, i.e., $\ba=\{a_{n}\}_{n\in\N_0}$ is a strictly decreasing sequence of positive numbers satisfying
\begin{equation}\label{e:eva}
\frac{a_{n+1}}{a_n} = O \bigl( 2^{-2^{n+2}} \bigr)  \qquad \text{ as } n \to +\infty .
\end{equation}
Theorem~\ref{t:prevalence} guarantees the prevalence of continuous functions with the locking property in bigger subspaces of H\"{o}lder continuous functions (than the space of functions satisfying \eqref{e:eva}), which confirms the prediction mentioned in \cite[Section~1]{Boc18} suggested by experimental evidence, and is another step towards the Hunt--Ott conjectures.

As a counterpart to Theorem~\ref{t:prevalence}, the following result for little Lipschitz functions holds.

\begin{theorem}   \label{t:prevalence_lip}
Assume the hypotheses of Theorem~\ref{t:prevalence}. Then $\sP\sU(\sigma) \cap \lip(\Sigma^{+},d_{\ba})$ is a prevalent set in $\lip(\Sigma^{+},d_{\ba})$. Moreover, $\lock_{\sigma}(\Sigma^{+},d_{\ba})$ is a prevalent set in $\lip(\Sigma^{+},d_{\ba})$.
\end{theorem}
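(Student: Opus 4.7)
The plan is to adapt the proof of Theorem~\ref{t:prevalence} by arranging that the compactly supported probe measure witnessing prevalence already lives in the closed subspace $\lip(\Sigma^+, d_{\ba}) \subseteq \Lip(\Sigma^+, d_{\ba})$. Since the definition of shyness requires the probe to be a Borel probability measure on the ambient space itself, the conclusion of Theorem~\ref{t:prevalence} does not formally transfer to $\lip(\Sigma^+, d_{\ba})$. However, any compactly supported probe whose support already sits in $\lip(\Sigma^+, d_{\ba})$ serves equally well as a witness of shyness there, since for every $v \in \lip(\Sigma^+, d_{\ba}) \subseteq \Lip(\Sigma^+, d_{\ba})$ the transversality inequality needed in the subspace is a special case of the one already proved in the ambient space.

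The key observation is that every cylinder function on $\Sigma^+$, i.e., every continuous function depending on only finitely many coordinates, belongs to $\lip(\Sigma^+, d_{\ba})$ for any strictly decreasing sequence $\ba$. Indeed, a depth-$N$ cylinder function is constant on each $d_{\ba}$-ball of radius smaller than $a_N$, so its oscillation on balls of radius $r$ vanishes identically once $r$ is sufficiently small, trivially satisfying the $o(r)$ condition defining $\lip(\Sigma^+, d_{\ba})$. Consequently, any finite-dimensional subspace of $\CCC(\Sigma^+)$ generated by cylinder functions sits inside $\lip(\Sigma^+, d_{\ba})$, and normalized Lebesgue measure on a compact subset of any such subspace is a legitimate compactly supported Borel probability measure on $\lip(\Sigma^+, d_{\ba})$.

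The main task, and the step I expect to be the principal obstacle, is to verify that the probe measure constructed in the proof of Theorem~\ref{t:prevalence} can in fact be taken to be of exactly this form, i.e., supported in a finite-dimensional subspace spanned by cylinder functions. In the maximum-mean-cycle probe strategy originating in \cite{BZ16}, the random perturbations added to a fixed function are weighted sums of indicator-like cylinder terms of bounded depth, and the transversality estimates that drive the proof rely only on the sup-norm behaviour of the perturbations together with the combinatorial structure of the shift; these ingredients are insensitive to the distinction between $\Lip(\Sigma^+, d_{\ba})$ and $\lip(\Sigma^+, d_{\ba})$. Once this verification is carried out, the prevalence of $\sP\sU(\sigma) \cap \lip(\Sigma^+, d_{\ba})$ follows immediately from the corresponding transversality statement in the proof of Theorem~\ref{t:prevalence}. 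For the second assertion, I would invoke the little Lipschitz analogue of the Yuan--Hunt result \cite[Theorem~4.1]{LZ23}, which identifies $\lock_\sigma(\Sigma^+, d_{\ba})$ with the interior of $\sP(\sigma) \cap \lip(\Sigma^+, d_{\ba})$, combined with the stronger form of the probe argument that shows the random perturbation places the function in $\lock_\sigma(\Sigma^+, d_{\ba})$ with full probability.
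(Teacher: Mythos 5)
Your overall strategy---arranging for the transverse probe measure to be supported inside $\lip(\Sigma^{+},d_{\ba})$ and then reusing the transversality statement from Theorem~\ref{t:prevalence}---is exactly the paper's route. But the specific mechanism you propose for achieving this contains a genuine error. The probe measure $\P_{\bb}$ is \emph{not} supported on a finite-dimensional subspace spanned by cylinder functions, and it cannot be made so. The random perturbation $g$ is an infinite Haar series $\sum_{\bw\in\Sigma_*} b_{\abs{\bw}} Y_{\bw} h_{\bw}$ with nonzero coefficients at \emph{every} depth; randomness at all depths is essential. If the probe were supported at depths $<N$ for some fixed $N$, then for a translate $f_0$ the gap criterion \eqref{e:gap} could only be randomized at levels $n\leq N$, where the gap produced is at most of order $4^{-n}b_{n-1}$, while the right-hand side of \eqref{e:gap} contains the fixed tail of $f_0$, of order $a_n \LIP_{d_{\ba}}(f_0)$. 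The level $n$ at which the former dominates the latter depends on $\LIP_{d_{\ba}}(f_0)$, which is unbounded as $f_0$ ranges over the space; since transversality must hold for \emph{every} translate simultaneously, no single finite truncation depth suffices. Your parenthetical claim that the perturbations in the probe strategy of \cite{BZ16} are ``of bounded depth'' is the root of the problem.

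The correct (and simpler) fix, which is what the paper does, is to observe that the \emph{entire infinite-dimensional} Hilbert brick $\cH_{\bb}$ with gauge $b_n = a_n/n$ is already contained in $\lip(\Sigma^{+},d_{\ba})$: for $f_{\be}\in\cH_{\bb}$ and $\bx\disagree\by = n$ one has
\begin{equation*}
\abs{f_{\be}(\bx)-f_{\be}(\by)} \leq \sum_{m=n}^{+\infty} b_m \leq 2 b_n = 2a_n/n = o(a_n),
\end{equation*}
using the rapid decay of $\ba$. Thus $\P_{\bb}$ is a compactly supported Borel probability measure on $\lip(\Sigma^{+},d_{\ba})$ itself, and for each $f_0\in\lip(\Sigma^{+},d_{\ba})$ Theorem~\ref{t:brick} gives $f_0+g\in\Lock_{\sigma}(\Sigma^{+},d_{\ba})$ almost surely. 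The second assertion then follows from the elementary inclusion $\Lock_{\sigma}(\Sigma^{+},d_{\ba})\cap\lip(\Sigma^{+},d_{\ba})\subseteq\lock_{\sigma}(\Sigma^{+},d_{\ba})$ (robustness under all Lipschitz perturbations implies robustness under the smaller class of little Lipschitz perturbations); the appeal to the Yuan--Hunt-type interior characterization from \cite{LZ23} in your last paragraph is not needed.
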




%
%
%
%

\begin{rem}
Since $d_{\ba}^\alpha = d_{\ba^\alpha}$ where $\ba^\alpha \= \{a_{n}^\alpha\}_{n\in\N_0}$, it is easy to see that we can replace the metric $d_{\ba}$ by $d_{\ba}^{\alpha}$ for $\alpha \in  ( - \log_\theta 4, 1]$ in Theorems~\ref{t:prevalence} and~\ref{t:prevalence_lip} to extend these results to H\"older continuous functions with exponent sufficiently close to $1$.
\end{rem}

\smallskip

\textbf{Main ideas of the proof.} The strategy of the proof of Theorem~\ref{t:prevalence} relies on an approach of finite-dimensional approximations via Haar wavelets for ergodic optimization, which can be conveniently re-expressed as a maximum mean cycle problem on de~Bruijn--Good directed graphs. Due to the hypothesis~\eqref{e:decay}, the coefficients in the Haar series for a Lipschitz function (with respect to $d_{\ba}$) decay fast in a controlled way. Our main goal is to show that with high probability, the maximizing measure for a function $f$ coincides with that for a step function obtained from truncating the Haar series for $f$ at some finite step in the finite-dimensional approximations. To this end, we adapt some idea from \cite[Section 5]{CCK15}, and make refined estimates to get a lower bound for the fluctuation on the (conditional) gap between the mean-weights of the two heaviest directed cycles on the sequence of de~Bruijn--Good directed graphs.  These estimates ensure that the hypothesis~\eqref{e:decay} is sufficient to guarantee the prevalence of the periodicity of the maximizing measure. For a comparison between our strategy and that from \cite{CCK15}, see a discussion before the proof of Proposition~\ref{p:conditional_layered_Gap_estimate} in Section~\ref{s:random_perturbation}.

De~Bruijn--Good directed graphs were independently introduced by N.~G.~de~Bruijn \cite{deBr46} and I.~J.~Good \cite{Go46} in 1946, and have played important roles in both pure mathematics (for example, enumerative combinatorics) and applied mathematics (for example, genome assembly). The appearance of de~Bruijn--Good directed graphs in this paper is natural, as walks on these graphs correspond to orbits under the shift operator $\sigma$ (see \cite{BZ16}).

Investigations on the directed cycle with the maximum mean-weight in a directed graph are sometimes known as the \emph{maximum mean cycle problem}. This problem is one of the most fundamental problems in combinatorial optimization and has a long history of algorithmic studies, for example, \cite{DIG99, Ka78}.

It is also worth mentioning that our methods should be extendable to shifts on an arbitrary finite number of symbols or more generally to subshifts of finite type by investigating natural generalizations of de~Bruijn--Good directed graphs. Favoring clarity at the expense of generality, we opted to work with the full shift on $2$ symbols. In this context, we are able to express with relative ease the random perturbations key to the proof of Theorem~\ref{t:prevalence}.

\smallskip

To compare Theorem~\ref{t:prevalence} with \cite[Theorems~A and B]{BZ16}, we note that the proof of Theorem~B (and consequently Theorem~A) in \cite{BZ16} bounds the probability $\P_{\textbf{b}}\{g\in\mathcal{H}_{\textbf{b}} : \Gap_n(f_0+g)\leq \epsilon_n\}$ by the maximum number of pairs of neighbours in $\sC(\BG_n)$ (see \cite[Equation~(5.3)]{BZ16}). By \cite[Subsection~3.5]{BZ16}, such a number is no more than $2^{2^{n}}$. Hence the approach in \cite{BZ16} requires the evanescent condition to offset the impact of $2^{2^{n}}$. 

In contrast, Proposition~\ref{p:conditional_layered_Gap_estimate} in this paper bounds a similar probability with more sophisticated probabilistic tools, leading to an improvement from the evanescence condition to condition~(\ref{e:decay}) in the current paper. Two main new ingredients are discussed as follows. First, in Propositions~\ref{p:conditional_layered_Gap_estimate} and~\ref{p:goal}, we obtain the desired bound on the probability of small gap from a bound on some conditional probability  $\mathbb{P}(\{\Gap_{n,\bm{t}}\leq \epsilon_{n}\})$ of small gap. Here $\Gap_{n,\bm{t}}$ is the absolute value of the difference between the largest two values in $\{ X_{\bm{t}} (\cC)  :  \cC \in \sC(\BG_n) \}$, where $\{ X_{\bm{t}} (\cC) : \cC \in \sC (\BG_n) \}$ has the distribution of the collection of random variables $\bigl\{ M_n^{f_0 +g}(\cC) : \cC \in \sC (\BG_n) \bigr\}$ under the conditioning of $\{Y_{\bw} = t_{\bw} : \bw \in \Sigma_{\leq n-2} \} $. Second, by bounding the probability density function of $X_{\bm{t}} (\cC) $ and applying the concentration inequality (for the uniform distribution), we get an upper bound for $\P(\{\Gap_{n,\bm{t}}\leq \epsilon_{n}\})$ under an assumption on exponential decay of $\epsilon_{n}$ (rather than the double-exponential decay in \cite{BZ16}).

In view of the connection to the coding of hyperbolic smooth systems, it is natural to consider the case where $a_n=O (\theta^{n} )$, for $\theta\in(0,1)$. In such a case, $d_{\ba}$ coincides with the classical Cantor distance on $\Sigma^{+}$. However, a major obstacle for proving Theorem~\ref{t:prevalence} in such a case lies in obtaining an effective lower bound for $\P(\{\Gap_{n,\bm{t}} \leq \epsilon_{n}\})$ when $\epsilon_{n}$ decays to zero at a speed slower than an exponential rate (c.f.\ Proposition~\ref{p:conditional_layered_Gap_estimate}).

In addition, the upper bound $1/4$ on $\theta$ in Theorem~\ref{t:prevalence} comes from the constants $2^{-n}$ and $2^{n}$ in (\ref{e:conditional_layered_Gap_estimate}) from Proposition~\ref{p:conditional_layered_Gap_estimate}. These constants come from our choices of $\{Y_{\bw}\}_{\bw\in\Sigma_*}$ as i.i.d.~random variables with the uniform distribution on $[-1,1]$. We suspect that the constant $1/4$ might be improvable by using other distributions for $\{Y_{\bw}\}_{\bw\in\Sigma_*}$. However, the alternative proof could be significantly lengthier and too complicated to showcase the main ideas of our strategy. Given that the case where $a_n=O(\theta^{n})$ for $\theta\in(0,1)$ remains open, such technical considerations may be better suited for subsequent investigations.

\smallskip

The paper is organized as follows. Section~\ref{s:Preliminaries} is devoted to preliminaries, where we recall some basic facts about full shifts, Haar series, and the compactness of Hilbert bricks (see Lemma~\ref{l:brick_def}). Section~\ref{s:concrete_formulation} provides a more concrete formulation (see Theorem~\ref{t:brick}) of our main result Theorem~\ref{t:prevalence}, with the proof of the implication included at the end of the section. In Section~\ref{s:Graph_gap} we recall the de~Bruijn--Good directed graphs and the gap criterion for the locking property that was developed in \cite{BZ16}, see Lemma~\ref{l:gap}. In Section~\ref{s:random_perturbation} we adapt some idea from \cite[Section~5]{CCK15} to make refined estimates for the (conditional) gap between the mean-weights of the two heaviest directed cycles, see Proposition~\ref{p:conditional_layered_Gap_estimate} and Proposition~\ref{p:goal}. Using these estimates, we show in Section~\ref{s:Proof_T_Brick} that hypothesis~\eqref{e:decay} guarantees that with probability $1$, the gap criterion is satisfied at some finite step in the finite-dimensional approximations, establishing Theorem~\ref{t:brick}. Thus we complete the proof of Theorem~\ref{t:prevalence}. Finally, Section~ \ref{s:gound_states} is devoted to the proof of Theorem~\ref{t:prevalence_lip}.

\subsection*{Acknowledgments} 
The authors want to thank Jairo~Bochi and Oliver Jenkinson for encouragement, and the anonymous referee(s) for valuable comments. Y.~Zhang is grateful to Peking University for the hospitality, during his stay when part of this work was done. J.~Ding was partially supported by NSFC Key Program Project No. 12231002. Z.~Li was partially supported by NSFC Nos.~12101017, 12090010, 12090015, and BJNSF No.~1214021. Y.~Zhang was partially supported by NSFC Nos. 12161141002, 12271432, and Guangdong Basic and Applied Basic Research Foundation.

\section{Preliminaries}  \label{s:Preliminaries}

In this section, we will introduce the background and necessary concepts.

\subsection{Full shift} \label{ss:Full_shift}
We follow the convention that $\N \coloneqq \{1, \, 2, \, 3, \, \dots\}$ and $\N_0 \coloneqq \{0\} \cup \N$ in this paper.

For each $n \in \N$, the set $\Sigma_{n} \= \{0,\,1\}^{n}$ of \emph{words of length $n$} (or \emph{$n$-words}) in the alphabet $\{0,\,1\}$ is the set of binary sequences of length $n$. Write $\Sigma_{0} \=\{\emptyset \}$ and call $\emptyset$ the word of length $0$. Define $\abs{\bw} \=n$ for each $n$-word, $n\in\N_0$. The set $\Sigma^+ \= \{0,\,1\}^{\N}$ of \emph{infinite words} in the alphabet $\{0,\,1\}$ is the set of one-sided infinite binary sequences.  Denote the set of \emph{finite words} by $\Sigma_* \= \bigcup_{n \in \N_0 }\Sigma_{n}$ and the set of words of length at most $k$, for $k\in\N_0$, by $\Sigma_{\leq k} \= \bigcup_{n = 0}^k \Sigma_{n}$. Note that $\Sigma_*$ includes $\emptyset$, i.e., the word of length $0$. We often write $\{w_n\}_{n\in\N} = w_1 w_2 \dots w_n \dots$ for an infinite word $\bw= \{w_n\}_{n\in\N} \in \Sigma^+$, and similarly, write $\{z_n\}_{n=1}^m = z_1 z_2 \dots z_m$ for an $m$-word $\bz= \{z_n\}_{n=1}^m \in \Sigma_m$, $m\in\N$.

The \emph{left-shift operator} $\sigma  \: \Sigma^+ \rightarrow \Sigma^+$ is given by
\begin{equation*}
\sigma  ( \{w_i\}_{i\in\N} ) = \{w_{i+1}\}_{i\in\N}  \qquad \text{for } \{w_i\}_{i\in\N} \in \Sigma^+.
\end{equation*}
The pair $(\Sigma^+, \sigma )$ is called the \defn{full shift} on $\Sigma^+ = \{0,\,1\}^{\N}$.

For each pair of finite words $\bw, \, \bz\in\Sigma_*$, define $\bw\bz$ to be their concatenation if $\bw\neq \emptyset$ and set $\bw\bz \= \bz$ otherwise, and denote by $[\bw] \=\{\bw\by  :  \by \in \Sigma^+\}$ the \emph{cylinder set} beginning at $\bw$.

An infinite word $\bw \in \Sigma^+$ is a \emph{periodic point} of $(\Sigma^+, \sigma )$ if $\sigma^n(\bw) = \bw$ for some $n \in \N$, and the smallest such positive integer $n$ is called the \emph{period} of $\bw$. Let $P_\sigma$ denote the set of all periodic points of $(\Sigma^+, \sigma )$.

Let $\bx = \{x_i\}_{i\in\N_0}$ and $\by = \{y_i\}_{i\in\N_0}$ be two elements of $\Sigma^{+}$. Denote by $\bx \disagree \by$ the position of first disagreement between the sequences $\bx$ and $\by$, that is, the least $m \in \N$ such that $x_m \neq y_m$, with the convention $\bx \disagree \bx = +\infty$.
The following properties hold:
\begin{equation*}
\bx \disagree \by = \by \disagree \bx , \qquad
\bx \disagree \bz \ge \min\{ \bx \disagree \by, \, \by \disagree \bz \}.
\end{equation*}
For each $n \in \N$,
we define the \emph{$n$-th variation} of a function $f \: \Sigma^{+} \rightarrow \R$ as:
\begin{equation}\label{e:var}
\mbox{var}_n(f) \= \sup_{\bx\disagree  \by \ge n} \abs{ f(\bx)-f(\by) }  .
\end{equation}

\subsection{Haar series}  \label{ss:Haar}
As mentioned in \cite[Section 1.3]{BZ16}, for each $\bw \in \Sigma_*$, 
we define the \emph{Haar function}:
\begin{equation}\label{e:Haar_function}
h_{\bw}\=\frac{\mathbbold{1}_{[\bw0]}-\mathbbold{1}_{[\bw1]}}{2}.
\end{equation}
These functions are straightforward adaptations of the classical Haar functions on $[0,1]$ (see for example, \cite[Section~6.3]{Pi09}). It has an advantage to work on the Cantor set $\Sigma^{+}$ over working on the interval $[0,1]$: the Haar functions defined in \eqref{e:Haar_function} are continuous. We follow the choice of the normalization in \cite{BZ16}, which makes subsequent formulas simpler.

If a function $f \: \Sigma^{+} \rightarrow \R$ satisfies $\mbox{var}_n(f) = 0$ for some $n\in\N_0$, i.e., it is constant on each cylinder of length $n$, then it is called a \emph{step function of level $n$}. Such functions form a vector space $\cS_n$ of dimension $2^n$. For each $n \in \N$, the set $\{1\} \cup \{h_{\bw}  :   \abs{\bw}<n\}$ forms a basis of the vector space $\cS_n$.

Let $\beta$ be the unbiased Bernoulli measure on $\Sigma^{+}$, i.e., the probability measure that assigns equal weights to all cylinders of the same length. Let $L^2(\beta)$ denote the Hilbert space of functions that are square-integrable with respect to the measure $\beta$. Then the set $\{1\} \cup \{h_{\bw}  :  \bw \in \Sigma_*\}$ is an orthogonal basis of $L^2(\beta)$. Thus every $f \in L^2(\beta)$ can be represented by a \emph{Haar series}:
\begin{equation*}
f = c_{\emptyset} (f) + \sum_{{\bw} \in \Sigma_*} c_{\bw}(f) h_{\bw}  \qquad \text{(equality in $L^2(\beta)$),}
\end{equation*}
where the \emph{Haar coefficients} are defined as:
\begin{equation}\label{e:Haar_coefficients}
c_{\emptyset} (f) \= \int\! f \,\mathrm{d}{\beta}   , \qquad
c_{\bw}(f) \= 2^{\abs{\bw}+2} \int\! f \, h_{\bw} \,\mathrm{d}\beta,
\end{equation}
for each $\bw\in\Sigma_{n}$. It follows from \cite[Equations (2.2), (2.3), and (2.4)]{BZ16} that the \emph{$n$-th approximation} $A_n f$ of $f \in L^2(\beta)$ has the following equivalent characterizations:
\begin{itemize}
\item the projection of $f$ on the subspace $\cS_n$ along the orthogonal complement of $\cS_n$;
\item the sum of the truncated Haar series:
\begin{equation}  \label{e:A_truncated}
A_n f = c_{\emptyset} (f) + \sum_{\abs{\bw}<n} c_{\bw}(f) h_{\bw}  ;
\end{equation}
\item the function obtained by averaging $f$ on cylinders of length $n$:
\begin{equation}  \label{e:A_average}
A_n f = \sum_{ \bw \in \Sigma_n } \left( 2^n \int_{[\bw]} \! f \, \mathrm{d}\beta \right) \mathbbold{1}_{[\bw]}   .
\end{equation}
\end{itemize}
Thus we have
\begin{equation*}
\norm{f - A_n f}_\infty \le \mbox{var}_n(f)  ,
\end{equation*}
which, under the additional assumption that $f$ is continuous, converges to $0$ as $n$ tends to $+\infty$. Therefore every continuous function $f$ can be written as a uniformly convergent series:
\begin{equation*}
f = c_{\emptyset} (f) + \sum_{n=0}^{+\infty}   \sum_{ {\bw} \in \Sigma_n } c_{\bw}(f) h_{\bw}  ,
\end{equation*}
which, with some abuse of language, we also call a \emph{Haar series}.

\subsection{Gauges}  \label{ss:gauge}

Let $\ba = \{a_n\}_{n \in \N_0}$ be a strictly decreasing sequence of positive numbers converging to $0$. 

Consider a function $f \in \CCC (\Sigma^+)$. It follows from \eqref{e:var} and \eqref{e:Haar_coefficients} that for each $n\in\N$ and each $\bw \in \Sigma_n$, we have
\begin{equation}  \label{e:c_vs_var}
c_{\bw} (f) \leq   \var_n (f).
\end{equation}
Similarly, if $f$ is Lipschitz with respect to $d_{\ba}$, then
\begin{equation}  \label{e:LIP_vs_var_over_an}
\LIP_{d_{\ba}} (f) = \sup_{ n \in \N }  \frac{ \var_n(f) }{ a_n }.
\end{equation}

A \emph{gauge} is a family $\bb = \{b_{\bw}\}_{\bw \in \Sigma_*}$ of positive numbers indexed by finite words. We say that a gauge $\bb = \{b_{\bw}\}_{\bw \in \Sigma_*}$ is weakly \emph{$\ba$-admissible} if
\begin{equation}\label{e:adm}
\ob_n  = o(a_n)
\quad \text{as } n \to +\infty,
\end{equation}
where
\begin{equation*}
\ob_n\= \max_{ \bw \in \Sigma_n } \{ b_{\bw} \}
\quad \text{for each } n\in\N_0 .
\end{equation*}

The proof of \cite[Lemma~1.1]{BZ16} in \cite[Section~2]{BZ16} establishes the following lemma, which is stronger than the statement of \cite[Lemma~1.1]{BZ16}.

\begin{lemma}\label{l:brick_def}
Let $(\Sigma^+, \sigma )$ be the full shift on binary sequences.
Suppose that $\ba=\{a_{n}\}_{n\in\N_0}$ is a strictly decreasing sequence of positive numbers converging to zero satisfying $\sum_{k\geq n}a_{k}=O(a_{n})$ as $n\to +\infty$. Let $\bb = \{b_{\bw}\}_{\bw \in \Sigma_*}$ be a weakly $\ba$-admissible gauge. For a family $\be =\{ e_{\bw} \}_{\bw \in \Sigma_*}$ of real numbers satisfying $\abs{e_{\bw}} \leq b_{\bw}$ for each $\bw \in \Sigma_*$, the Haar series $\sum_{\bw \in \Sigma_*} e_{\bw} h_{\bw}$ converges in the uniform norm to a function $f_{\be}$ in $\Lip(\Sigma^{+},d_{\ba})$. Moreover, the set
\begin{equation}  \label{e:Def_Hb}
\cH_{\bb} \= \{ f_{\be}  :  \be =\{ e_{\bw} \}_{\bw \in \Sigma_*} \text{ with }  \abs{e_{\bw}} \leq b_{\bw} \text{ for each }\bw \in \Sigma_* \}
\end{equation}
is a compact subset of $\Lip(\Sigma^{+},d_{\ba})$.
\end{lemma}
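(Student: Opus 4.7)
My plan is to prove the lemma in three steps: (I) establish uniform convergence of the Haar series to a continuous function $f_\be$; (II) show $f_\be \in \Lip(\Sigma^+, d_\ba)$ directly from the decay estimates; and (III) deduce compactness of $\cH_\bb$ as the continuous image of a compact product space under the map $\be \mapsto f_\be$.

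For (I), the key observation is that for each fixed $n \in \N_0$ the Haar functions $\{h_\bw : \bw \in \Sigma_n\}$ are supported on the pairwise disjoint cylinders $\{[\bw] : \bw \in \Sigma_n\}$ and satisfy $\|h_\bw\|_\infty = 1/2$, so $\bigl\|\sum_{\bw \in \Sigma_n} e_\bw h_\bw\bigr\|_\infty \le \ob_n/2$. Since $\ob_n = o(a_n)$ and $\sum_n a_n < \infty$ (a direct consequence of $\sum_{k \ge n} a_k = O(a_n)$), the Weierstrass M-test yields uniform, and hence unique, convergence to a continuous function $f_\be$. For (II), fix distinct $\bx, \by \in \Sigma^+$ with $\bx \disagree \by = m$, so that $d_\ba(\bx, \by) = a_m$. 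A case-by-case inspection shows that $h_\bw(\bx) \ne h_\bw(\by)$ only when either (a) $|\bw| = m - 1$ and $\bw$ equals the common length-$(m-1)$ prefix $\bw^*$ of $\bx, \by$, in which case $|h_\bw(\bx) - h_\bw(\by)| = 1$, or (b) $|\bw| \ge m$ and $\bw$ is a prefix of exactly one of $\bx, \by$, in which case $|h_\bw(\bx) - h_\bw(\by)| \le 1/2$. Summing these contributions yields
\begin{equation*}
|f_\be(\bx) - f_\be(\by)| \le b_{\bw^*} + \sum_{k \ge m} \ob_k ,
\end{equation*}
and combining $\ob_n = o(a_n)$ with the hypothesis $\sum_{k \ge n} a_k = O(a_n)$ bounds the right-hand side by $O(a_m) = O(d_\ba(\bx, \by))$, confirming $f_\be \in \Lip(\Sigma^+, d_\ba)$.

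For (III), define $\Phi \colon K := \prod_{\bw \in \Sigma_*} [-b_\bw, b_\bw] \to \Lip(\Sigma^+, d_\ba)$ by $\be \mapsto f_\be$. By Tychonoff, $K$ is compact in the product topology, and $\cH_\bb = \Phi(K)$ by definition. It therefore suffices to prove that $\Phi$ is continuous with respect to the Lipschitz norm on the target. Approximating $\be$ by truncating its coordinates outside $\Sigma_{\le N}$, the finite part depends continuously on those coordinates (a finite linear combination of fixed Lipschitz functions), and the remaining tail $\be \mapsto \sum_{|\bw| > N} e_\bw h_\bw$ is shown to have Lipschitz norm $o(1)$ as $N \to \infty$, uniformly over $\be \in K$, by applying the estimate from (II) to the tail. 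Hence $\Phi$ is continuous, and $\cH_\bb = \Phi(K)$ is the continuous image of a compact set, hence compact.

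The main obstacle will be the uniform (in $\be \in K$) control of the Lipschitz norm of the tail of the Haar series in (III). This requires a quantitative matching between the gauge decay $\ob_n = o(a_n)$ and the metric decay $a_n$, leveraging the geometric-type behavior forced by $\sum_{k \ge n} a_k = O(a_n)$ to dominate both the critical-level contribution $b_{\bw^*}$ and the tail sum $\sum_{k \ge m} \ob_k$ by $O(a_m)$, uniformly in the common prefix $\bw^*$ and the disagreement index $m$.
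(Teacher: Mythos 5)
Your overall architecture---uniform convergence level by level, a pointwise Lipschitz estimate obtained by identifying exactly which Haar functions separate a given pair, and compactness of $\cH_{\bb}$ as the continuous image of the compact product $\prod_{\bw}[-b_{\bw},b_{\bw}]$---is the standard one; the paper itself does not reprove this lemma but cites the proof of Lemma~1.1 of \cite{BZ16}, which follows the same route, and your Steps (I) and (III) are sound modulo Step (II). The gap is in the last line of Step (II). Your displayed bound $\abs{f_{\be}(\bx)-f_{\be}(\by)}\le b_{\bw^*}+\sum_{k\ge m}\ob_k$ with $\abs{\bw^*}=m-1$ is correct, but it is \emph{not} $O(a_m)$ under the stated hypotheses: the critical term $b_{\bw^*}$ sits at level $m-1$, so weak admissibility only gives $b_{\bw^*}\le\ob_{m-1}=o(a_{m-1})$, and nothing controls $a_{m-1}/a_m$; on the contrary, for the sequences this paper cares about ($a_{n+1}/a_n=O(\theta^n)$) that ratio tends to infinity. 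Concretely, take $a_n=2^{-2^n}$, $b_{\bw}=a_{\abs{\bw}}/(\abs{\bw}+1)$, and $e_{\bw}=b_{\bw}$ when $\bw$ is a block of zeros and $e_{\bw}=0$ otherwise; for $\bx=000\cdots$ and $\by$ with first disagreement at position $m$ one gets $\abs{f_{\be}(\bx)-f_{\be}(\by)}\ge \ob_{m-1}=a_{m-1}/m$, while $d_{\ba}(\bx,\by)=a_m=2^{-2^{m-1}}a_{m-1}$, so the ratio blows up. What your estimate actually proves is $\abs{f_{\be}(\bx)-f_{\be}(\by)}=O(a_{m-1})$.

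What you have hit is an off-by-one mismatch in the paper between the definition \eqref{e:def_d_a} of $d_{\ba}$ (distance $a_m$ when the first disagreement is at position $m\in\N$, hence common prefix of length $m-1$) and the admissibility condition \eqref{e:adm}, which is imported from \cite{BZ16}, where the metric assigns $a_\ell$ to a pair whose longest common prefix has length $\ell$ (i.e., $\ell=m-1$ in your notation). Under that convention the unique fully separating Haar word is the common prefix itself, at level $\ell$, and your bound becomes $\ob_\ell+\sum_{k>\ell}\ob_k=o(1)\sum_{k\ge\ell}a_k=O(a_\ell)=O(d_{\ba}(\bx,\by))$, which closes; this is also the convention implicitly used in \eqref{e:c_vs_var} and \eqref{e:LIP_vs_var_over_an}. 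So you should either adopt that indexing of $d_{\ba}$ or strengthen admissibility to $\ob_n=o(a_{n+1})$; as written, the final inequality of Step (II) is asserted but not provable, and the same issue propagates into the claim in Step (III) that the tail of the series has Lipschitz norm $o(1)$ uniformly over the brick.
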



The set $\cH_{\bb}$ as in the lemma above is called the \emph{Hilbert brick} with gauge $\bb$. By means of Haar coefficients, it can be identified with the product space $\prod_{\bw \in \Sigma_*} [-b_{\bw}, b_{\bw}]$. In particular, we can endow each interval $[-b_{\bw}, b_{\bw}]$, $\bw \in \Sigma_*$, with a probability measure $\mu_{\bw}$. Then by using the identification from Lemma~\ref{l:brick_def}, we obtain a probability measure
$\P_{\bb} \= \prod_{\bw\in\Sigma_*}\mu_{\bw}$ on the Banach space $\Lip(\Sigma^{+},d_{\ba})$ supported on the compact subset $\cH_{\bb}$.


In this paper, we are most interested in the special case when all $\mu_{\bw}$ are uniform measures on $[-b_{\bw}, b_{\bw}]$.

\section{A concrete formulation of Theorem~\ref{t:prevalence}}     \label{s:concrete_formulation}
Theorem~\ref{t:prevalence} follows from an explicit construction of a compactly supported probability measure that is transverse to the complement of $\Lock_{\sigma} (\Sigma^{+},d_{\ba})$. Therefore, in order to establish Theorem~\ref{t:prevalence}, we prove Theorem~\ref{t:brick} below, which has concrete information on such a measure.

\begin{theorem}  \label{t:brick}
Let $(\Sigma^+, \sigma )$ be the full shift on binary sequences.
Let $\ba=\{a_{n}\}_{n\in\N_0}$ be a strictly decreasing sequence of positive numbers satisfying $a_{n+1}/a_n = O ( \theta^n)$ as $n \to +\infty$ for some $\theta \in (0,1/4)$. Set $b_n \= a_n / n$ for each $n\in\N_0$. Consider the weakly $\ba$-admissible gauge $\bb = \{ b_{\bw} \}_{\bw \in \Sigma_*}$ given by $b_{\bw} \= b_{\abs{\bw}}$ for each $\bw \in \Sigma_*$. Then there exists a probability measure $\P_{\bb}$ supported on the compact set $\cH_{\bb}$ such that for each $f_0 \in \Lip(\Sigma^{+},d_{\ba})$,
\begin{equation*}
\P_{\bb} (  \{ g \in \cH_{\bb}  :   f_0 + g \in \Lock_{\sigma} (\Sigma^{+},d_{\ba}) \} ) = 1  ,
\end{equation*}
and consequently, $\P_{\bb} ( \{ g \in \Lip(\Sigma^{+},d_{\ba})  :   f_0 + g \notin \Lock_{\sigma} (\Sigma^{+},d_{\ba}) \} ) = 0$.
\end{theorem}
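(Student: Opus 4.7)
The plan is to take $\P_{\bb}$ to be a product of uniform distributions and combine three ingredients: the gap criterion for the locking property of Section~\ref{s:Graph_gap}, the conditional-gap estimates of Section~\ref{s:random_perturbation}, and the decay assumption \eqref{e:decay} via a Borel--Cantelli type argument. Concretely, for each $\bw \in \Sigma_*$ let $\mu_{\bw}$ denote the uniform probability measure on $[-b_{\abs\bw}, b_{\abs\bw}]$, and set $\P_{\bb} \= \prod_{\bw\in\Sigma_*} \mu_{\bw}$, identified via Lemma~\ref{l:brick_def} as a probability measure on $\cH_{\bb}\subseteq \Lip(\Sigma^+, d_{\ba})$. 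Fix $f_0 \in \Lip(\Sigma^+, d_{\ba})$; for $g$ sampled from $\P_{\bb}$ and $f \= f_0 + g$, the Haar coefficients are $c_{\bw}(f) = c_{\bw}(f_0) + E_{\bw}$, where the $E_{\bw}$ are mutually independent and each $E_{\bw}$ is uniform on $[-b_{\abs\bw}, b_{\abs\bw}]$.

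For each $n \in \N$, let $G_n$ denote the de~Bruijn--Good directed graph of level $n$ weighted via the step function $A_n f$ from \eqref{e:A_truncated}, and let $\Gap_n(f)$ be the difference between the largest and the second-largest mean-weights among its directed cycles. The gap criterion (Lemma~\ref{l:gap}) guarantees that $f \in \Lock_{\sigma}(\Sigma^+, d_{\ba})$ as soon as $\Gap_n(f)$ exceeds a fixed constant multiple of the uniform truncation error $\norm{f - A_n f}_{\infty} \le \var_n(f)$, which by \eqref{e:LIP_vs_var_over_an} is at most $a_n \LIP_{d_{\ba}}(f)$. Thus it suffices to prove that, $\P_{\bb}$-almost surely, there exists $n \in \N$ with $\Gap_n(f) > C a_n \LIP_{d_{\ba}}(f)$ for an absolute constant $C$; note also that $\LIP_{d_{\ba}}(f) \le \LIP_{d_{\ba}}(f_0) + \sup_{h \in \cH_{\bb}}\LIP_{d_{\ba}}(h) < \infty$ by the compactness of $\cH_{\bb}$ in $\Lip(\Sigma^+,d_{\ba})$.

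I would then fix $k \in \N$ and condition on the $\sigma$-algebra $\cF_{\ne k}$ generated by all $E_{\bw}$ with $\abs\bw \ne k$. Since $A_{k+1} f = A_k f + \sum_{\abs\bw = k}(c_{\bw}(f_0) + E_{\bw}) h_{\bw}$, only the independent uniform variables $\{E_{\bw}\}_{\abs\bw = k}$ of amplitude $b_k = a_k/k$ remain random, and the mean-weight of every directed cycle in $G_{k+1}$ is an affine function of them. Propositions~\ref{p:conditional_layered_Gap_estimate} and~\ref{p:goal} from Section~\ref{s:random_perturbation}, adapting \cite[Section~5]{CCK15}, then yield a conditional anti-concentration bound of the form
\begin{equation*}
\P_{\bb}\bigl[\,\Gap_{k+1}(f) \ge \kappa b_k \,\big|\, \cF_{\ne k}\,\bigr] \ge q_k
\end{equation*}
for an absolute constant $\kappa > 0$ and an explicit sequence $q_k > 0$. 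Under \eqref{e:decay} with $\theta < 1/4$, one has $a_{k+1}/b_k = k \, a_{k+1}/a_k = O(k \theta^k) \to 0$, so $\{\Gap_{k+1}(f) \ge \kappa b_k\}$ is contained in the target event $\{\Gap_{k+1}(f) > C a_{k+1} \LIP_{d_{\ba}}(f)\}$ for all sufficiently large $k$. Since each event $\{\Gap_{k+1}(f) \ge \kappa b_k\}$ is measurable with respect to layers $\le k$, the conditional lower bound gives $\P_{\bb}[\{\Gap_{k+1}(f) \ge \kappa b_k\} \mid \cF_{\le k-1}] \ge q_k$, and a conditional Borel--Cantelli argument along an appropriate subsequence of $k$ forces the target event to occur for some (in fact, infinitely many) $n$ almost surely. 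Lemma~\ref{l:gap} then yields $f_0 + g \in \Lock_{\sigma}(\Sigma^+, d_{\ba})$; the complementary zero-measure statement follows because $\P_{\bb}$ is supported on $\cH_{\bb}$.

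The main obstacle is the conditional anti-concentration estimate. One must show that the uniform random perturbation at layer $k$, of amplitude only $b_k = a_k/k$, both singles out a dominant mean-weight cycle in $G_{k+1}$ and leaves a quantitative gap $\kappa b_k$ to the runner-up, with a conditional probability $q_k$ that stays large enough to overcome the combinatorial complexity of the graph (on the order of $2^{k+1}$ vertices and $\sim 4^{k}$ pairs of short competing cycles). The precise exponent $\theta < 1/4$ in \eqref{e:decay} is a direct reflection of the trade-off between the perturbation amplitude $b_k$ and the cycle-counting factor $4^{k}$ inherent to the de~Bruijn--Good graph.
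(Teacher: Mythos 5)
Your overall strategy is the paper's: take $\P_{\bb}$ to be the product of uniform measures on the Hilbert brick $\cH_{\bb}$, reduce the locking property to the gap criterion of Lemma~\ref{l:gap}, and feed in the layered conditional anti-concentration estimates of Section~\ref{s:random_perturbation} via Borel--Cantelli. Your reduction of the right-hand side of \eqref{e:gap} to a fixed constant multiple of $a_n\LIP_{d_{\ba}}(f)$ (via \eqref{e:c_vs_var}, \eqref{e:LIP_vs_var_over_an}, and the super-geometric decay of $\ba$) is also essentially what the paper does.

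However, the quantitative form of the anti-concentration bound you invoke is not what Propositions~\ref{p:conditional_layered_Gap_estimate} and~\ref{p:goal} provide, and as written the argument fails at that step. Proposition~\ref{p:conditional_layered_Gap_estimate} gives $\P(\Gap_{n,\bm{t}}\le 2^{-n}b_{n-1}\epsilon)\le 2^n\epsilon$; to make the failure probability summable one must take $\epsilon\sim 2^{-n}n^{-3}$, so the gap actually guaranteed (Proposition~\ref{p:goal}) is of order $4^{-n}n^{-3}b_{n-1}$ --- exponentially smaller than your threshold $\kappa b_k$ with $\kappa$ an absolute constant. No lower bound $q_k$ on $\P(\Gap_{k+1}\ge\kappa b_k)$ with $\sum_k q_k=+\infty$ is available from these propositions (the top two of the exponentially many competing cycles are typically far closer than $\kappa b_k$), so the divergence-type conditional Borel--Cantelli step has nothing to run on. The tell is that your comparison $a_{k+1}/b_k=O(k\theta^k)\to 0$ uses only $\theta<1$, so with an absolute $\kappa$ the hypothesis $\theta<1/4$ would never be needed; it enters precisely because the achievable gap carries the factor $4^{-n}$ and must still dominate the tail bound $O(a_n\LIP_{d_{\ba}}(f_0+g))=O(\theta^{n-1}(n-1)b_{n-1})$. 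The repair is to use the correct threshold: Proposition~\ref{p:goal} already yields, almost surely, $\Gap_n(f_0+g)\ge 4^{-n}n^{-3}b_{n-1}$ for all sufficiently large $n$ (this is the convergence half of Borel--Cantelli applied to the complements; no subsequence or second Borel--Cantelli is needed, and a single good level $n$ suffices for Lemma~\ref{l:gap}), and the inequality $(4\theta)^{-n}\gg n^{4}$ then closes the comparison with the tail of the Haar series, exactly as in Section~\ref{s:Proof_T_Brick}.
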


A proof of Theorem~\ref{t:brick} will be presented in Section~\ref{s:Proof_T_Brick}.

\begin{proof}[Proof of Theorem~\ref{t:prevalence} assuming Theorem~\ref{t:brick}]
Theorem~\ref{t:brick} concludes that the measure $\P_{\bb}$ is transverse (as defined in Section~\ref{s:Introduction}) to the complement of the set $\Lock_{\sigma} (\Sigma^{+},d_{\ba})$ and is supported on the compact set $\cH_{\bb}$. Thus $\Lock_{\sigma}(\Sigma^{+},d_{\ba})$ is a prevalent set (as defined in Section~\ref{s:Introduction}) in $\Lip (\Sigma^{+},d_{\ba})$. Therefore Theorem~\ref{t:brick} implies Theorem~\ref{t:prevalence}.
\end{proof}

\section{A gap criterion for the locking property}   \label{s:Graph_gap}
From now on, our aim is to establish Theorem~\ref{t:brick}. To this end, we need to utilize a criterion for the locking property in terms of some probabilistic objects for the maximum mean cycle problem on de~Bruijn--Good digraphs. 

Let us make some remarks to explain the relations between (infinite dimensional) ergodic optimization, finite-dimensional ergodic optimization, and the maximum mean cycle problem on digraphs. The facts stated below until the start of Subsection~\ref{ss:Graph_theory} serve merely to assist the reader in comprehending our approach and will not be used in the proofs.

Recall that $P_{\sigma}$ denotes the set of all periodic points of $(\Sigma^{+},\sigma)$. Each periodic orbit in $P_{\sigma}$ is associated to a unique invariant probability measure supported on it. It is well-known that the set of these measures is a dense subset in $\MMM(\Sigma^{+},\sigma)$. Consider a sequence of maps $\{\pi_{n}\}_{n\in \N_{0}}$ where $\pi_{n} \:\MMM(\Sigma^{+},\sigma) \rightarrow \R^{2^{n}}$ is given by $\pi_{n}(\mu) \=\{ \mu ( [\bw] ) \}_{\bw\in\Sigma_{n}}$ for $\mu \in \MMM(\Sigma^{+},\sigma)$. Denote $R_{n} \= \pi_{n}(\cM(\Sigma^{+},\sigma))$. It then follows from Sections~3 and~4 in \cite{Zi95} (see also Subsection~3.5 in \cite{BZ16}) that $R_{0}\subset R_{1}\subset \cdots$ form a nested sequence of finite-dimensional polyhedra, whose vertices are measures supported on some periodic orbits in $P_{\sigma}$. Moreover, each polyhedron $R_{n}$ is a projection of the next one $R_{n+1}$, and $\MMM(\Sigma^{+},\sigma)$ can be recovered as the inverse limit of the sequence $\{R_{n}\}_{n\in\N_0}$.

Since $\MMM(\Sigma^{+},\sigma)$ is a Poulsen simplex, the polyhedra in $\{R_{n}\}_{n\in\N_0}$ are not regular simplices.  On the contrary, they have huge numbers of vertices, and their faces are small. By Proposition~3.5 in \cite{BZ16}, $R_{n}$ is isomorphic to the so-called ``circulation polytope'' of the de~Bruijn--Good digraph $\BG_n$. With the aid of this isomorphism, the finite-dimensional ergodic optimization problem over $R_{n}$ can be conveniently restated as the maximum mean cycle problem on the de~Bruijn--Good digraph $\BG_n$, and a criterion for the locking property is expressed in terms of the gap between the mean-weights of two heaviest directed cycles in the de~Bruijn--Good digraph accordingly, see the gap criterion in Subsection~\ref{ss:gapcriterion}. This step requires some notions from graph theory.

\subsection{Graph theory} \label{ss:Graph_theory}
In this subsection, we go over some key concepts and notation from graph theory. For a more detailed introduction to graph theory, we refer to \cite{BM08}.

A \emph{directed graph} (or \emph{digraph}) $\cD$ is an ordered pair $(V(\cD),A(\cD))$ of sets, where $V(\cD)$ is the set of \emph{vertices} and $A(\cD)$ is the set of \emph{arcs}, together with an \emph{incidence function} $\psi_\cD \: A(\cD) \rightarrow V(\cD) \times V(\cD)$ that sends an arc $a$ to an ordered pair $(u,v)$ of vertices. For an arc $a\in A(\cD)$ with $\psi_\cD(a) = (u,v)$, we call the vertex $u$ the \emph{tail} of $a$, denoted by $\tail(a)$, and call $v$ the \emph{head} of $a$, denoted by $\head(a)$. 

A digraph $\cD'$ is a \emph{subdigraph} of a digraph $\cD$ if $V(\cD') \subseteq V(\cD)$, $A(\cD') \subseteq A(\cD)$, and $\psi_{\cD'}$ is the restriction of $\psi_\cD$ to $A(\cD')$. We then say that $\cD$ \emph{contains} $\cD'$ or that $\cD'$ is \emph{contained in} $\cD$, and write $\cD\supseteq \cD'$ or $\cD' \subseteq \cD$, respectively.

A \emph{directed cycle} $\cC$ is a digraph satisfying that $\card{A(\cC)} = \card {V(\cC)} \eqqcolon n  \in \N$, that $V(\cC) = \bigcup_{a \in A(\cC)} \{ \tail(a), \, \head(a) \} $,  and that there is an enumeration $a_0, \, a_1,\, \dots, \, a_{n-1}$ of $A(\cC)$ such that $\head(a_i) = \tail(a_{i+1})$ for each $i\in\{0,\,1,\,\dots, \, n-1\}$, where $a_n \= a_0$. For a directed cycle $\cC$ we write $\len(\cC)$ for the length (i.e., the number of arcs) of $\cC$. For a digraph $D$, we denote by $\sC(D)$ the collection of all directed cycles contained in $D$.

The de~Bruijn--Good digraph $\BG_n$, $n\in\N$, is a digraph whose set of vertices $V(\BG_n) \= \Sigma_{n-1}$ and set of arcs $A(\BG_n) \= \Sigma_n$ consists of words of length $n-1$ and words of length $n$, respectively, which satisfies that $\tail(w_1 \dots w_n) = w_1\dots w_{n-1}$ and $\head(w_1 \dots w_n) = w_2 \dots w_{n}$ for each $n$-word $w_1 \dots w_n \in A(\BG_n)$. The first five de~Bruijn--Good digraphs are shown in Figure~\ref{f:BG}.

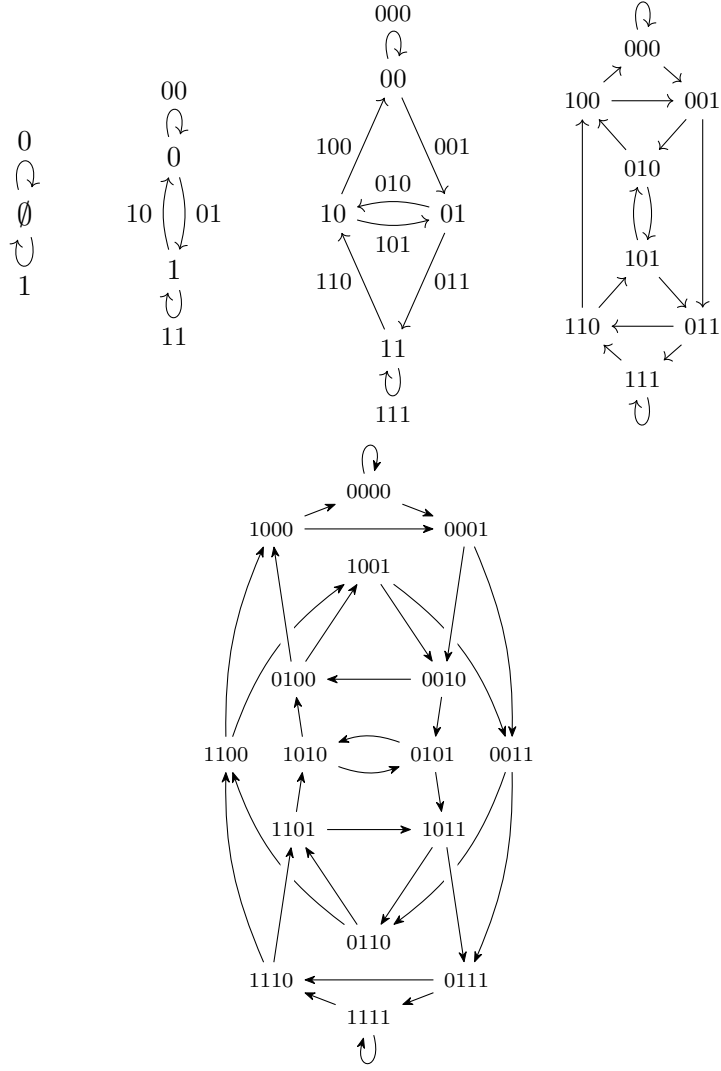
\begin{figure}[htb]
\begin{tikzpicture}[baseline] 
   \node(emptyword) at (0, 0){$\emptyset$};
   \draw(emptyword)edge[loop above]  node{\small $0$} (emptyword);
   \draw(emptyword)edge[loop below] node{\small $1$} (emptyword);
\end{tikzpicture}
\qquad
\begin{tikzpicture}[baseline] 
   \node(0) at (0,.75){\small $0$};
   \node(1) at (0,-.75){\small $1$};
   \draw(0)edge[loop above] node{\footnotesize $00$} (0);
   \draw(0)edge[->,bend left=15] node[right]{\footnotesize $01$} (1);
   \draw(1)edge[loop below] node{\footnotesize $11$} (1);
   \draw(1)edge[->,bend left=15] node[left]{\footnotesize $10$} (0);
\end{tikzpicture}
\qquad
\begin{tikzpicture}[baseline] 
   \node(00) at (  0, 1.8){\footnotesize $00$};
   \node(01) at ( .8,   0){\footnotesize $01$};
   \node(10) at (-.8,   0){\footnotesize $10$};
   \node(11) at (  0,-1.8){\footnotesize $11$};
   \draw(00)edge[loop above]       node{\scriptsize $000$} (00);
   \draw(00)edge[->]               node[right]{\scriptsize $001$} (01);
   \draw(01)edge[->,bend right=15] node[above]{\scriptsize $010$} (10);
   \draw(01)edge[->]               node[right]{\scriptsize $011$} (11);
   \draw(10)edge[->]               node[left] {\scriptsize $100$} (00);
   \draw(10)edge[->,bend right=15] node[below]{\scriptsize $101$} (01);
   \draw(11)edge[->]               node[left] {\scriptsize $110$} (10);
   \draw(11)edge[loop below]       node{\scriptsize $111$} (11);
\end{tikzpicture}
\qquad
\begin{tikzpicture}[baseline] 
   \node(000) at (  0, 2.2){\scriptsize $000$};
   \node(001) at ( .8, 1.5){\scriptsize $001$};
   \node(100) at (-.8, 1.5){\scriptsize $100$};
   \node(010) at (  0,  .6){\scriptsize $010$};
   \node(101) at (  0, -.6){\scriptsize $101$};
   \node(011) at ( .8,-1.5){\scriptsize $011$};
   \node(110) at (-.8,-1.5){\scriptsize $110$};
   \node(111) at (  0,-2.2){\scriptsize $111$};
   \draw(000)edge[loop above]      (000);
   \draw(000)edge[->]              (001);
   \draw(001)edge[->]              (010);
   \draw(001)edge[->]              (011);
   \draw(010)edge[->]              (100);
   \draw(010)edge[->,bend left=15] (101);
   \draw(011)edge[->]              (110);
   \draw(011)edge[->]              (111);
   \draw(100)edge[->]              (000);
   \draw(100)edge[->]              (001);
   \draw(101)edge[->,bend left=15] (010);
   \draw(101)edge[->]              (011);
   \draw(110)edge[->]              (100);
   \draw(110)edge[->]              (101);
   \draw(111)edge[->]              (110);
   \draw(111)edge[loop below]      (111);
\end{tikzpicture}
 \quad
 \begin{tikzpicture}[baseline,>={Stealth[round]}]
   \node(0000) at (   0, 3.5){\tiny $0000$};
   \node(0001) at ( 1.3, 3  ){\tiny $0001$};
   \node(1000) at (-1.3, 3  ){\tiny $1000$};
   \node(1001) at (   0, 2.5){\tiny $1001$};
   \node(0010) at (   1, 1  ){\tiny $0010$};
   \node(0100) at (  -1, 1  ){\tiny $0100$};
   \node(0011) at ( 1.9, 0  ){\tiny $0011$};
   \node(0101) at ( .85, 0  ){\tiny $0101$};
   \node(1010) at (-.85, 0  ){\tiny $1010$};
   \node(1100) at (-1.9, 0  ){\tiny $1100$};
   \node(1011) at (   1,-1  ){\tiny $1011$};
   \node(1101) at (  -1,-1  ){\tiny $1101$};
   \node(0110) at (  0 ,-2.5){\tiny $0110$};
   \node(0111) at ( 1.3,-3  ){\tiny $0111$};
   \node(1110) at (-1.3,-3  ){\tiny $1110$};
   \node(1111) at ( 0  ,-3.5){\tiny $1111$};

   \draw(0110)edge[->,bend left=17] (1100);
   \draw(1001)edge[->,bend left=17] (0011);
   \draw(1100)edge[->,bend left=17] (1001);
   \draw(0011)edge[->,bend left=17] (0110);

   \draw(0001)edge[line width=6pt,draw=white] (0010);
   \draw(0100)edge[line width=6pt,draw=white] (1000);
   \draw(1011)edge[line width=6pt,draw=white] (0111);
   \draw(1110)edge[line width=6pt,draw=white] (1101);

   \draw(0000)edge[loop above]      (0000);
   \draw(0000)edge[->]              (0001);
   \draw(0001)edge[->]              (0010); 
   \draw(0001)edge[->,bend left=12] (0011);
   \draw(0010)edge[->]              (0100);
   \draw(0010)edge[->]              (0101);
   \draw(0011)edge[->,bend left=12]  (0111);
   \draw(0100)edge[->]              (1000); 
   \draw(0100)edge[->]              (1001);
   \draw(0101)edge[->,bend right=20](1010);
   \draw(0101)edge[->]              (1011);
   \draw(0110)edge[->]              (1101);
   \draw(0111)edge[->]              (1110);
   \draw(0111)edge[->]              (1111);
   \draw(1000)edge[->]              (0000);
   \draw(1000)edge[->]              (0001);
   \draw(1001)edge[->]              (0010);
   \draw(1010)edge[->]              (0100);
   \draw(1010)edge[->,bend right=20](0101);
   \draw(1011)edge[->]              (0110);
   \draw(1011)edge[->]              (0111); 
   \draw(1100)edge[->,bend left=12] (1000);
   \draw(1101)edge[->]              (1010);
   \draw(1101)edge[->]              (1011);
   \draw(1110)edge[->,bend left=12] (1100);
   \draw(1110)edge[->]              (1101); 
   \draw(1111)edge[->]              (1110);
   \draw(1111)edge[loop below]      (1111);
 \end{tikzpicture}
\caption{The de~Bruijn--Good digraphs $\BG_n$ for $1 \le n \le 5$.}\label{f:BG}
\end{figure}

\subsection{Gap criterion}\label{ss:gapcriterion}

Fix a real-valued continuous function $f \in \CCC (\Sigma^+)$ and an integer $n \in \N$. We define inductively weights $W_n^f(\bw)$ associated to $f$ on the arcs $\bw$ of the de~Bruijn--Good digraph $\BG_n$ as follows:
\begin{itemize}
\smallskip
  \item If $n=1$, then $W_{1}^f(\alpha ) \=\frac{(-1)^{\alpha}}{2} c_{\emptyset}(f)$ for each $\alpha\in\{0,1\}$;

\smallskip
  \item If $n\geq 2$, then $W_{n}^f (\bw\alpha )\=W_{n-1}^f(\bw )+ \frac{(-1)^{\alpha}}{2}(c_{\bw}(f))$ for all $\bw\in\Sigma_{n-1}$ and $\alpha\in\{0,1\}$.
\end{itemize}
Thus
\begin{equation}\label{e:def_Wn}
W_n^f(\underbrace{x_{1} x_{2}\dots x_{n}}_{\bw} )  =  \frac12 \sum_{i=0}^{n-1}(-1)^{x_{i+1} } c_{x_{1} x_{2} \dots x_{i}}(f).
\end{equation}

The pair $\bigl( \BG_{n}, W_n^f \bigr)$ induces the \emph{mean-weight} (induced by $f$)
\begin{equation}  \label{e:def_MnC}
M_n^f(\cC)  \= \frac{1}{\len(\cC)} \sum_{ \bw \in A(\cC) } W_n^f (\bw)
\end{equation}
on each directed cycle $\cC$ contained in $\BG_n$.

Denote by $\cC_1$ and $\cC_2$ the two heaviest (in terms of mean-weight, in descending order) directed cycles contained in $\BG_n$. Then define
\begin{equation*}
	\Gap_n(f) \= M_n^f(\cC_1) - M_n^f(\cC_2) \geq 0 
\end{equation*}
to be the gap in the mean-weights of $\cC_1$ and $\cC_2$ in $\BG_n$.

It is easy to check that 
\begin{align}  
W_n^f ( \bw ) & =  W_n^{ A_n f } ( \bw ), \label{e:W_f_vs_Anf} \\
M_n^f ( \cC )  & =  M_n^{ A_n f } ( \cC ),  \label{e:M_f_vs_Anf} \\
\Gap_n(f)       & =  \Gap_n(A_n f) \label{e:Gap_f_vs_Anf}  
\end{align}
for $\bw \in \Sigma_n$ and $\cC \in A(\BG_n)$.

In the notations above, we have the following gap criterion from \cite[Lemma 4.2]{BZ16}.

\begin{lemma}[Gap criterion for the locking property \cite{BZ16}]\label{l:gap}
Let $(\Sigma^+, \sigma )$ be the full shift on binary sequences.
Consider a strictly decreasing sequence $\ba = \{a_n\}_{n\in\N_0}$ of positive numbers satisfying
\begin{equation}\label{e:summability}
\sum_{n=1}^{+\infty} n a_n < +\infty   .
\end{equation}
If a function $f \in \Lip(\Sigma^{+},d_{\ba})$ satisfies the following inequality:
\begin{equation}\label{e:gap}
\Gap_n( f) > \sum_{k=n}^{+\infty} (k-n+1) \max_{ \bw \in \Sigma_k } \{ \abs{c_{\bw}(f)} \}  .
\end{equation}
for some $n \in \N$, then $f \in \Lock_{\sigma} (\Sigma^{+},d_{\ba})$.
\end{lemma}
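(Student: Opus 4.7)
The plan is to reduce the ergodic optimization to a maximum mean cycle problem on $\BG_n$ via the step function $A_n f$, and then control the tail $f - A_n f$ through its Haar expansion. By the identity~\eqref{e:Gap_f_vs_Anf}, hypothesis~\eqref{e:gap} controls the cycle gap of $A_n f$. Since $A_n f$ is constant on each level-$n$ cylinder, for any $\sigma$-invariant probability measure $\mu$ the vector $\{ \mu([\bw]) \}_{\bw \in \Sigma_n}$ satisfies Kirchhoff's conservation law on $\BG_n$ (by $\sigma$-invariance, $\sum_{\alpha} \mu([\alpha \bv]) = \mu([\bv]) = \sum_{\alpha} \mu([\bv \alpha])$ for $\bv \in \Sigma_{n-1}$), hence decomposes non-negatively as a combination of unit cycle flows:
\begin{equation*}
\textstyle \int A_n f \,\mathrm{d}\mu \;=\; c_{\emptyset}(f) + \sum_{\cC \in \sC(\BG_n)} s_{\cC}\, M_n^f(\cC), \quad s_{\cC} \geq 0, \quad \sum_{\cC} s_{\cC} = 1.
\end{equation*}
Letting $\cC_1$ be the best cycle in $\BG_n$ and $\mu_1$ the periodic measure supported on the associated orbit (for which all $s_{\cC}$ concentrate on $\cC_1$), we obtain
\begin{equation*}
\textstyle \int A_n f\,\mathrm{d}\mu_1 - \int A_n f\,\mathrm{d}\mu \;\geq\; \bigl(\sum_{\cC \neq \cC_1} s_{\cC}\bigr)\Gap_n(f).
\end{equation*}

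The second step controls the tail and is the \textbf{main technical obstacle}. Using $f - A_n f = \sum_{k \geq n}\sum_{\bv \in \Sigma_k} c_{\bv}(f) h_{\bv}$, I would establish the key estimate
\begin{equation*}
\textstyle \bigl\lvert \int (f - A_n f)\,\mathrm{d}(\mu_1 - \mu) \bigr\rvert \;\leq\; \bigl(\sum_{\cC \neq \cC_1} s_{\cC}\bigr) \sum_{k=n}^{+\infty}(k - n + 1) \max_{\bw \in \Sigma_k}\abs{c_{\bw}(f)}.
\end{equation*}
The factor $(k - n + 1)$ enters because controlling $\int h_{\bv}\,\mathrm{d}\mu$ for $\bv \in \Sigma_k$ with $k \geq n$ requires iterating $\sigma$-invariance $k - n + 1$ times to express the measures of the $(k+1)$-cylinders $[\bv \alpha]$ in terms of the cycle decomposition at level $n$, while the prefactor reflects that this discrepancy vanishes when $\mu = \mu_1$. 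Combining the two estimates gives
\begin{equation*}
\textstyle \int f\,\mathrm{d}\mu_1 - \int f\,\mathrm{d}\mu \;\geq\; \bigl(\sum_{\cC \neq \cC_1} s_{\cC}\bigr)\Bigl(\Gap_n(f) - \sum_{k=n}^{+\infty}(k - n + 1)\max_{\bw \in \Sigma_k}\abs{c_{\bw}(f)}\Bigr),
\end{equation*}
which is strictly positive under hypothesis~\eqref{e:gap} whenever $\sum_{\cC \neq \cC_1} s_{\cC} > 0$. The degenerate case when $\mu$ matches $\mu_1$ on every level-$n$ cylinder is handled by a separate argument (iterating the same estimate at a sufficiently fine level $m > n$, using that $\mu_1$ is supported on a periodic orbit and that hypothesis~\eqref{e:gap} continues to apply), yielding $\mu_1$ as the unique $f$-maximizing measure.

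Finally, to upgrade from uniqueness to $f \in \Lock_{\sigma}(\Sigma^+, d_{\ba})$, I would observe that both sides of~\eqref{e:gap} are continuous in $f$ with respect to the Lipschitz norm. The left-hand side $\Gap_n(f)$ is piecewise-linear in the finitely many Haar coefficients $c_{\bw}(f)$ with $\abs{\bw} < n$, so it remains positive and continuous for nearby $f$. For the right-hand side, using $\abs{c_{\bv}(g)} \leq \var_{\abs{\bv}}(g) \leq \LIP_{d_{\ba}}(g)\, a_{\abs{\bv}}$ via~\eqref{e:c_vs_var} and~\eqref{e:LIP_vs_var_over_an}, the perturbation under $f \mapsto f + g$ is bounded by $\LIP_{d_{\ba}}(g)\sum_{k \geq n}(k - n + 1) a_k$, which is finite by the summability hypothesis~\eqref{e:summability} and vanishes as $\LIP_{d_{\ba}}(g) \to 0$. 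Hence~\eqref{e:gap} is preserved throughout a Lipschitz neighborhood of $f$, and applying the preceding argument to each function in this neighborhood identifies $\mu_1$ as their common unique maximizing measure, establishing the locking property.
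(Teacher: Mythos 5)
First, for context: the paper does not prove Lemma~\ref{l:gap} at all --- it is imported verbatim from \cite[Lemma~4.2]{BZ16} --- so the comparison is with the proof in that reference. Your outline does reproduce its overall architecture: identify $\pi_n(\mu)$ with a circulation on $\BG_n$ and decompose it over directed cycles to handle $A_n f$, bound the Haar tail $f-A_nf$ against the same defect $\delta=\sum_{\cC\neq\cC_1}s_{\cC}$, and obtain locking from the openness of \eqref{e:gap} in the Lipschitz norm via \eqref{e:c_vs_var}, \eqref{e:LIP_vs_var_over_an}, and \eqref{e:summability}. That last, perturbative step of yours is correct and complete.

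There is, however, a genuine gap: the inequality you yourself label the main technical obstacle, namely $\bigl|\int(f-A_nf)\,\mathrm{d}(\mu_1-\mu)\bigr|\leq \delta\sum_{k\geq n}(k-n+1)\max_{\bw\in\Sigma_k}|c_{\bw}(f)|$, is the entire substance of the lemma, and you assert it rather than prove it. The heuristic ``iterating $\sigma$-invariance $k-n+1$ times'' is not an argument: one must actually show that $\sum_{\bv\in\Sigma_k}\bigl|\int h_{\bv}\,\mathrm{d}(\mu-\mu_1)\bigr|$, equivalently the level-$(k+1)$ total-variation discrepancy, grows at most linearly in $k$ with slope controlled by the level-$n$ defect, and that this defect is itself controlled by $\delta$; the precise weight $(k-n+1)$ is exactly what makes \eqref{e:gap} the correct hypothesis, so this cannot be waved through. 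A secondary problem is that $\delta$ is not well defined as written, since the decomposition of a circulation into cycle flows is not unique; one must fix a decomposition, or replace $\delta$ by an intrinsic quantity such as the mass $\mu$ places on arcs outside $A(\cC_1)$, and prove the tail bound for that quantity. Finally, your ``degenerate case'' is vacuous: if $\pi_n(\mu)=\pi_n(\mu_1)$ then $\mu$ is carried by points whose level-$n$ itinerary never leaves the simple cycle $\cC_1$, which forces $\mu=\mu_1$; the separate argument you propose there (reapplying the estimate at finer levels $m>n$) is both unnecessary and unjustified, since nothing guarantees that the gap condition holds at level $m$.
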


In informal terms, if the tail of the Haar series is small compared to the gap of its initial part (up to the length $n-1$ (see \eqref{e:A_truncated}, \eqref{e:def_Wn}, and \eqref{e:Gap_f_vs_Anf})), so it does not influence the maximizing measure. Note that $f \in \Lock_{\sigma} (\Sigma^{+},d_{\ba})$ then in particular $f  \in \sP\sU(\sigma)$.

\section{Random perturbations on the arc-weight}   \label{s:random_perturbation}
In this section, we will describe the random weights on arcs of the de~Bruijn--Good digraphs $\BG_{n}$, $n\in\N$. Namely, we deal with the Haar expansion of $f_0 +g$, with $f_0 \in \Lip(\Sigma^{+},d_{\ba})$ (the deterministic part) and $g\in\mathcal{H}_{\bb}$ (the random part).

Let $\{Y_{\bw}\}_{\bw\in\Sigma_*}$ be a collection of i.i.d.\ random variables, having the uniform distribution on $[-1, 1]$, indexed by the set $\Sigma_*$ of finite words.

Suppose that $\ba=\{a_{n}\}_{n\in\N_0}$ is a strictly decreasing sequence of positive numbers converging to zero satisfying $\sum_{k\geq n}a_{k}=O(a_{n})$ as $n\to +\infty$. Let $\{b_n\}_{n\in\N_0}$ be a sequence of positive numbers satisfying $b_n = o (a_n)$ as $n \to +\infty$.

We formally define a random function
\begin{equation}\label{e:random_function}
  g=\sum_{m= 0}^{+\infty} \sum_{\bw \in \Sigma_m} b_{m} \cdot Y_{\bw}\cdot h_{\bw},
\end{equation}
which has random coefficients on its Haar series. By Lemma~\ref{l:brick_def}, the series above always converge uniformly, and the limit $g$ is Lipschitz with respect to $d_{\ba}$. For each $n \in \N$, the $n$-th approximation $A_n g$ of $g$ is given by
\begin{equation}\label{e:truncation}
  A_{n}g =\sum_{m=0}^{n-1}\sum_{\bw\in\Sigma_m}b_{m}Y_{\bw}h_{\bw}.
\end{equation}

Fix an arbitrary function $f_0 \in \Lip(\Sigma^{+},d_{\ba})$. With abuse of notation, we use similar symbols for the random arc-weight, mean-weight, and gap induced by random functions that are continuous. In particular, the (random) arc-weights $W_n^{f_0+g}(\bw)$, $n\in\N$, on $\BG_{n}$ induced by $f_0 + g$ are defined by the following inductive process:
\begin{itemize}
\smallskip
  \item If $n=1$, then $W_{1}^{f_0+g}(\alpha) =\frac{(-1)^{\alpha}}{2} (c_{\emptyset}(f_0)+b_0 \cdot Y_{\emptyset})$ for each $\alpha\in\{0,1\}$;

\smallskip
  \item If $n\geq 2$, then $W_{n}^{f_0+g}(\bw\alpha) = W_{n-1}^{f_0+g}(\bw)+ \frac{(-1)^{\alpha}}{2}(c_{w_{1}w_{2}\cdots w_{n-1}}(f_0)+b_{n-1}\cdot Y_{\bw})$ for all $\bw\in\Sigma_{n-1}$ and $\alpha\in\{0,1\}$.
\end{itemize}

Fix an integer $n\in\N$. We then have
\begin{equation}\label{e:random_Wn}
W_n^{f_0+g}(\underbrace{w_{1} w_{2}\dots w_{n}}_{\bw})=\frac12 \sum_{i=0}^{n-1}(-1)^{w_{i+1} }(c_{w_{1}w_{2}\dots w_{i}}(f_0)+b_{i}Y_{w_{1} w_{2} \dots w_{i}}),
\end{equation}
for $\bw \in \Sigma_n$. Moreover, the (random) mean-weight of a directed cycle $\cC$ in $\BG_n$ induced by $f_0 +g$ is then given by
\begin{equation}  \label{e:random_MnC}
M_n^{f_0+g}(\cC) = \frac{1}{\len(\cC)} \sum_{ \bw \in A(\cC) } W_n^{f_0+g}( \bw ).
\end{equation}
Finally, the (random) gap, induced by $f_0 + g$, between the mean-weights of the two heaviest  (in terms of mean-weight) directed cycles $\cC_1$ and $\cC_2$ in $\BG_n$ is 
\begin{equation} \label{e:random_Gap}
\Gap_n (f_0+g) = M_n^{f_0+g} (\cC_1) - M_n^{f_0+g} (\cC_2) \geq 0.
\end{equation}
Since uniform measures are continuous, almost surely there is a unique directed cycle with the maximal mean-weight, i.e., $\Gap_n (f_0+g) > 0$.

The main technical result is formulated below.

\begin{prop}   \label{p:conditional_layered_Gap_estimate}
Let $(\Sigma^+, \sigma )$ be the full shift on binary sequences.
Let $\ba=\{a_{n}\}_{n\in\N_0}$ be a strictly decreasing sequence of positive numbers satisfying $a_{n+1}/a_n = O ( \theta^n)$ as $n \to +\infty$ for some $\theta \in (0,1/2)$. Let $\{b_n\}_{n\in\N_0}$ be a sequence of positive numbers satisfying $b_n = o (a_n)$ as $n \to +\infty$. Consider a weakly $\ba$-admissible gauge $\bb = \{ b_{\bw} \}_{\bw \in \Sigma_*}$ satisfying $b_{\bw}=b_{\abs{\bw}}$ for all $\bw \in \Sigma_*$. Fix an arbitrary function $f_0 \in \Lip(\Sigma^+, d_{\ba})$. Consider the random function $g$ defined in \eqref{e:random_function}.

Consider an integer $n \geq 2$, a real number $\epsilon \in (0, 1/2 )$,  and a collection $\bm{t} = \{ t_{\bw} \}_{ {\bw} \in \Sigma_{\leq n-2 }  }$ of real numbers $t_{\bw} \in\R$. Let $\{ X_{\bm{t}} (\cC) : \cC \in \sC (\BG_n) \}$ be a collection of random variables with joint distribution given by the conditional distribution of $\bigl\{ M_n^{f_0 +g}(\cC) : \cC \in \sC (\BG_n) \bigr\}$ given $\{Y_{\bw} = t_{\bw} : \bw \in \Sigma_{\leq n-2} \} $. Let $\Gap_{n,\bm{t}}$ be the absolute value of the difference between the largest two values in $\{ X_{\bm{t}} (\cC)  :  \cC \in \sC(\BG_n) \}$. Then
\begin{equation} \label{e:conditional_layered_Gap_estimate}
\P \bigl( \bigl\{ \Gap_{n, \bm{t}} \leq 2^{-n} b_{n-1} \epsilon  \bigr\} \bigr)
\leq   2^n \epsilon.
\end{equation}

\end{prop}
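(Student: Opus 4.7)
Set $\delta \= 2^{-n} b_{n-1} \epsilon$. The plan is to express the event $\cG \= \{ \Gap_{n, \bm{t}} \leq \delta \}$ as a union indexed by the $2^n$ arcs of $\BG_n$, and to show that each term contributes probability at most $\epsilon$. For each arc $\bw^* \in \Sigma_n$, I would let $F_{\bw^*}$ denote the event that the two cycles in $\sC(\BG_n)$ realizing the largest two values of $X_{\bm{t}}(\cdot)$ differ on whether they contain $\bw^*$, together with the requirement $\Gap_{n, \bm{t}} \leq \delta$. Because any two distinct directed cycles in $\BG_n$ have different arc sets, $\cG \subseteq \bigcup_{\bw^* \in \Sigma_n} F_{\bw^*}$ (up to the null event of ties, which by the absolute continuity of the $Y_{\bw}$ contributes probability zero).

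To control $\P(F_{\bw^*})$, I would write $\bw^* = \bw\alpha$ with $\bw \in \Sigma_{n-1}$ and $\alpha \in \{0,1\}$, and condition on the $\sigma$-algebra generated by $\{Y_{\bw'} : \bw' \in \Sigma_{n-1} \setminus \{\bw\}\}$ in addition to the given conditioning on $\{Y_{\bw'} : \bw' \in \Sigma_{\leq n-2}\}$. Under this conditioning, $Y_{\bw}$ remains uniform on $[-1,1]$, and inspection of \eqref{e:random_Wn} and \eqref{e:random_MnC} shows that each conditional mean-weight is an affine function $X_{\bm{t}}(\cC) = A_{\cC} Y_{\bw} + B_{\cC}$, where the slope $A_{\cC}$ equals $+b_{n-1}/(2\len(\cC))$ if $\bw 0 \in A(\cC)$ (say $\cC \in \cS_+$), $-b_{n-1}/(2\len(\cC))$ if $\bw 1 \in A(\cC)$ (say $\cC \in \cS_-$), and $0$ if $\bw \notin V(\cC)$ (say $\cC \in \cS_0$). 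Because every simple directed cycle in $\BG_n$ has length at most $\abs{\Sigma_{n-1}} = 2^{n-1}$, every non-zero slope has absolute value at least $b_{n-1}/2^n$.

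Next, I would introduce the upper envelopes $M_\star(y) \= \max_{\cC \in \cS_\star}(A_\cC y + B_\cC)$ for $\star \in \{+, -, 0\}$. Then $M_+$ is piecewise linear convex with right-derivative at least $b_{n-1}/2^n$ everywhere, while $\max\{M_-, M_0\}$ is non-increasing, so $y \mapsto M_+(y) - \max\{M_-(y), M_0\}$ is strictly increasing with right-derivative at least $b_{n-1}/2^n$. On the event $F_{\bw 0}$ the top cycle lies in $\cS_+$ and the second-best in $\cS_- \cup \cS_0$ (or vice versa); in either scenario the two top values of $\{X_{\bm{t}}(\cC) : \cC \in \sC(\BG_n)\}$ agree with $M_+(Y_{\bw})$ and $\max\{M_-(Y_{\bw}), M_0\}$, because the overall second-best must coincide with the envelope of the complementary class. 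Consequently $F_{\bw 0} \subseteq \bigl\{ \bigl| M_+(Y_{\bw}) - \max\{M_-(Y_{\bw}), M_0\} \bigr| \leq \delta \bigr\}$, and the monotonicity shows this set is an interval of length at most $2\delta \cdot 2^n/b_{n-1} = 2\epsilon$; since $Y_{\bw}$ has density $1/2$ on $[-1,1]$, the conditional probability is at most $\epsilon$. The case $\alpha = 1$ is treated symmetrically, comparing $M_-$ with $\max\{M_+, M_0\}$.

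Integrating out the conditioning and applying a union bound over the $2^n$ arcs in $\Sigma_n$ would then yield $\P(\cG) \leq 2^n \epsilon$, which is the desired estimate. The main obstacle, and the reason this more refined approach is needed, is that $\abs{\sC(\BG_n)}$ may grow doubly exponentially in $n$ (de~Bruijn already produced $2^{2^{n-2}-n+1}$ directed Hamilton cycles), so any direct union bound over pairs of cycles is hopeless. Reducing to a union bound over the $2^n$ arcs of $\BG_n$ via the three-slope decomposition, combined with a one-dimensional anti-concentration argument, is precisely what makes the bound $2^n \epsilon$ attainable.
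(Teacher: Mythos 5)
Your argument is correct, and it takes a genuinely different route from the paper's. The paper adapts the anti-concentration scheme of Chernozhukov--Chetverikov--Kato \cite{CCK15}: it restricts to the event $\cE$ that every $Y_{\bw}$, $\bw\in\Sigma_{n-1}$, lies in $[-(1-2\epsilon),\,1-2\epsilon]$, and then translates the whole block of coordinates indexed by $V(\cC)$ in the direction that raises $X_{\bm{t}}(\cC)$ by $\Delta$; since every competing cycle must miss at least one arc of $\cC$, its value rises by at most $\bigl(1-2^{-n+1}\bigr)\Delta$, so the translation converts the event that $\cC$ is the (possibly narrow) leader into the event that it leads by at least $2^{-n+1}\Delta$, and measure-preservation of the translation yields $\P\bigl(\Gap_{n,\bm{t}}\geq 2^{-n+1}b_{n-1}\epsilon\bigr)\geq\P(\cE)=(1-2\epsilon)^{2^{n-1}}\geq 1-2^n\epsilon$. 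You replace this global shift by a union bound over the $2^n$ arcs plus a one-dimensional anti-concentration estimate: conditioning on all coordinates except $Y_{\bw}$, the three envelopes $M_+$, $M_-$, $M_0$ are monotone in $Y_{\bw}$ with slopes bounded away from zero by $b_{n-1}/2^n$ (using $\len(\cC)\leq 2^{n-1}$), so the set of values of $Y_{\bw}$ on which the two relevant envelopes come within $\delta$ of each other is an interval of length at most $2\delta\, 2^n/b_{n-1}$. Both proofs rest on the same two structural facts --- a cycle has length at most $2^{n-1}$, and distinct cycles have distinct arc sets --- and both circumvent the doubly exponential number of cycles, yours by reducing to single-coordinate events and the paper's by a measure-preserving shift. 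The only points worth making explicit in a full write-up are the almost-sure absence of ties (so that the top two cycles are well defined and the covering of $\{\Gap_{n,\bm{t}}\leq\delta\}$ by the events $F_{\bw^*}$ is exhaustive up to a null set; the paper records this right after the statement of the proposition) and the degenerate case in which one of the classes $\cS_+$ or $\cS_-\cup\cS_0$ is empty, in which case $F_{\bw 0}$ is itself empty and contributes nothing.
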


\begin{remark}
Note that $A(\cC_1) \nsubseteq A(\cC_2)$ for each pair of distinct directed cycles contained in a directed graph $\cD$. Then by \eqref{e:random_Wn}, \eqref{e:random_MnC}, and the definition of $Y_{\bw}$ for $\bw\in\Sigma_*$, we get that the random functions $X_{\bm{t}} (\cC)$ and $X_{\bm{t}} (\cC')$ are distinct linear combinations of random variables $Y_{\bw}$, $\bw\in\Sigma_{\leq n-1}$, (with scalar multiples of Haar functions as coefficients) for distinct directed cycles $\cC$ and $\cC'$ contained in $\BG_n$. Moreover, $\Gap_{n, \bm{t}}$ in the proposition is well-defined almost surely.
\end{remark}

	The strategy of our proof of Proposition~\ref{p:conditional_layered_Gap_estimate}  is inspired by the proof of Theorem~3 in \cite{CCK15}. The proof in \cite{CCK15} states the anti-concentration inequalities of Gaussian random vectors and the proof lies in bounding the probability density function of the maximum of a Gaussian random vector. Our proof has three major differences:
	\begin{itemize}
		\smallskip
		\item[(i)] Due to the definition of prevalence, it requires a probability measure which is compactly supported. Thus we cannot directly use the jointly Gaussian random variables (which are not compactly supported) and the resulting estimates in the proof of Theorem~3 in \cite{CCK15}.
				
		\smallskip
		
		\item[(ii)] While in \cite{CCK15} special properties of jointly Gaussian random variables are used in an essential way in the general case, we have to work with uniform distributions relying on the combinatorics of de~Bruijn--Good digraphs and specific edge weights.
		
		 \smallskip
		
		\item[(iii)] Our strategy concentrates on the upper bound on the probability of small conditional gap $\Gap_{n,\bm{t}}$, rather than the maximum of a Gaussian random vector in the proof of Theorem~3 in \cite{CCK15}.
	\end{itemize}

\begin{proof}[Proof of Proposition~\ref{p:conditional_layered_Gap_estimate}]
Consider $n$, $\epsilon$, and $\bm{t}$ as given in the hypothesis.

Let $\cE$ be the event that $\abs{ Y_{\bw} } \leq 1 - 2 \epsilon$ for all $\bw \in \Sigma_{n-1}$.

We will investigate, for each $\cC\in \sC(\BG_n)$, the standard deviation $\sigma_{\cC} \= \sigma (X_{\bm{t}}(\cC))$, the mean $\mu_{\cC} \= \E[ X_{\bm{t}} (\cC) ]$, the probability density function $\phi_{\cC}$ for $X_{\bm{t}} (\cC)$, and the probability density function $\tphi_{\cC}$ for $X_{\bm{t}} (\cC)$ restricted to $\cE$. Then it follows from  the definition of $X_{\bm{t}}$, the fact that $\E [Y_{\bw} ] = 0$ for $\bw\in\Sigma_*$, \eqref{e:random_Wn}, and \eqref{e:random_MnC} that
\begin{equation}  \label{e:Pf_p_conditional_layered_Gap_estimate_Xt}
X_{\bm{t}}(\cC) = \mu_{\cC} + \frac{b_{n-1}}{2 \len(\cC) } \sum_{\bw \in V(\cC)} (-1)^{z_{\cC}(\bw)} Y_{\bw}
\end{equation}
for each $\cC\in \sC(\BG_n)$. Here and henceforth by $z_{\cC}(\bw)$ we denote, for each $\bw \in V(\cC)$, the unique number in $\{0, \, 1\}$ such that $\bw z_{\cC}(\bw) \in A(\cC)$. Note that 
\begin{equation}  \label{e:Pf_p_conditional_layered_Gap_estimate_Xt_bound}
\abs{ X_{\bm{t}} (\cC) - \mu_{\cC} } \leq b_{n-1} /2.
\end{equation}

Define, for each $\cC \in \sC(\BG_n)$ and each pair of numbers $x,\,x'\in\R$, the event
\begin{equation*}
\cR_{\bm{t}}(\cC,x') \= \{ X_{\bm{t}} (\cC')  \leq x' \text{ for all } \cC' \in \sC(\BG_n) \setminus \{ \cC \} \},
\end{equation*} 
and the conditional probabilities
\begin{align*}
Q_{\cC}(x, \, x')   & \= \P( \cR_{\bm{t}}(\cC,x')  \,|\, X_{\bm{t}} ( \cC ) = x  ),  \\
\tQ_{\cC}(x, \, x') & \= \P( \cE \cap \cR_{\bm{t}}(\cC,x')  \,|\, X_{\bm{t}} ( \cC ) = x ),   \\
Q_{\cC}(x)           & \= Q_{\cC}(x, \, x), \\
\tQ_{\cC}(x)         & \= \tQ_{\cC}(x, \, x).
\end{align*}

We first verify the following two claims.

\smallskip

\emph{Claim~1.} $\phi_{\cC} (x + \Delta) \geq \tphi_{\cC} (x)$ for all $\cC \in \sC(\BG_n)$, $\Delta \in [0, b_{n-1} \epsilon ]$, and $x \in \R$ satisfying $\abs{x - \mu_{\cC} } \leq  1 - b_{n-1} \epsilon$.

\smallskip
To establish Claim~1, we fix $\cC$, $\Delta$, and $x$ as in the claim. We consider a collection $\{ y_{\bw} \}_{\bw \in V(\cC) }$ of real numbers satisfying both
\begin{equation}  \label{e:Pf_p_conditional_layered_Gap_estimate_claim}
\mu_{\cC} + \frac{b_{n-1}}{2 \len(\cC) } \sum_{\bw \in V(\cC)} (-1)^{z_{\cC}(\bw)} y_{\bw} = x
\end{equation}
and $\abs{y_{\bw} } \leq 1- 2 \epsilon$ for each $\bw \in V(\cC)$. We set
\begin{equation*}
y'_{\bw} \= y_{\bw} + (-1)^{z_{\cC}(\bw)}  2\Delta / b_{n-1}
\end{equation*}
for each $\bw \in V(\cC)$. Then
\begin{equation*}
\abs{y'_{\bw} } 
\leq 1 - 2 \epsilon + \frac{2\Delta}{b_{n-1}}
\leq 1
\end{equation*}
for each $\bw \in V(\cC)$, and
\begin{align*}
&\mu_{\cC} + \frac{b_{n-1}}{2 \len(\cC) } \sum_{\bw \in V(\cC)} (-1)^{z_{\cC}(\bw)} y'_{\bw} \\
&\qquad = \mu_{\cC} + \frac{b_{n-1}}{2 \len(\cC) } \biggl(  \sum_{\bw \in V(\cC)} (-1)^{z_{\cC}(\bw)} y_{\bw}  + \sum_{\bw \in V(\cC)} \frac{2\Delta}{b_{n-1}} \biggr) \\
&\qquad = x + \Delta .
\end{align*}
Claim~1 then follows from \eqref{e:Pf_p_conditional_layered_Gap_estimate_Xt} and the translation invariance of uniformly distributed random variables.

\smallskip

By a similar argument as in the proof of Claim~1, we get the following claim.

\smallskip

\emph{Claim~2.} $Q_{\cC} \bigl( x + \Delta, \, x + \bigl( 1 - 2^{-n +1}   \bigr) \Delta \bigr) \geq \tQ_{\cC} (x)$  for all $\cC \in \sC(\BG_n)$, $\Delta \in [0, b_{n-1} \epsilon ]$, and $x \in \R$ satisfying $\abs{x - \mu_{\cC} } \leq  1 - b_{n-1} \epsilon$.

\smallskip

To establish Claim~2, we fix $\cC$, $\Delta$, and $x$ as in the claim. We consider a collection $\{ y_{\bw} \}_{\bw \in \Sigma_{n-1} }$ of real numbers satisfying both the equality in \eqref{e:Pf_p_conditional_layered_Gap_estimate_claim} and $\abs{y_{\bw} } \leq 1- 2 \epsilon$ for each $\bw \in \Sigma_{n-1}$. We set
\begin{equation*}
y'_{\bw} \= y_{\bw} + (-1)^{z_{\cC}(\bw)}  2\Delta / b_{n-1}
\end{equation*}
for each $\bw \in V(\cC)$, and set $y'_{\bu} \= y_{\bu}$ for each $\bu \in \Sigma_{n-1} \setminus V(\cC)$. Then $\abs{y'_{\bw} } \leq 1$ for all $\bw \in \Sigma_{n-1}$ as
\begin{equation*}
\abs{y'_{\bw} } 
\leq 1 - 2 \epsilon + \frac{2\Delta}{b_{n-1}}
\leq 1
\end{equation*}
for each $\bw \in V(\cC)$. On the other hand, consider an arbitrary directed cycle $\cC' \in \sC(\BG_n) \setminus \{\cC\}$. Since $\cC'$ and $\cC$ are distinct directed cycles, it is clear that $A(\cC') \setminus A(\cC) \neq \emptyset$. Consequently,
\begin{align*}
&\mu_{\cC'} + \frac{b_{n-1}}{2 \len(\cC') } \sum_{\bw \in V(\cC')} (-1)^{z_{\cC'}(\bw)} y'_{\bw} \\
&\qquad \leq \mu_{\cC'} + \frac{b_{n-1}}{2 \len(\cC') } \biggl(  \sum_{\bw \in V(\cC')} (-1)^{z_{\cC'}(\bw)} y_{\bw}  - \frac{2\Delta}{b_{n-1}} + \sum_{\bw \in V(\cC')} \frac{2\Delta}{b_{n-1}} \biggr) \\
&\qquad = x + \frac{\Delta}{ \len(\cC') } (    - 1 + \card V(\cC') ) \\
&\qquad \leq x + \bigl( 1 - 2^{-n +1}   \bigr) \Delta.
\end{align*}
Claim~2 then follows from \eqref{e:Pf_p_conditional_layered_Gap_estimate_Xt} and the translation invariance of uniformly distributed random variables.

\smallskip

Finally, we apply the two claims to finish the proof of the proposition. 

Provided $x'<x$, we define the events $\cQ(\cC) \= \cR_{\bm{t}}( \cC, x' ) \cap \{ X_{\bm{t}}(\cC) = x \}$, $\cC \in \sC(\BG_n)$. We observe that $\cQ(\cC)$ and $\cQ(\cC')$ are disjoint for distinct directed cycles $\cC$ and $\cC'$ in $\sC(\BG_n)$.

Define $\iota \= b_{n-1} \epsilon$. Then by the two claims, the observation above, \eqref{e:Pf_p_conditional_layered_Gap_estimate_Xt}, \eqref{e:Pf_p_conditional_layered_Gap_estimate_Xt_bound}, and the definitions of $Q_{\cC}$, $\tQ_{\cC}$, $\cR_{\bm{t}}$, and $\cE$, we get
\begin{align*}  
&\P  \bigl(   \Gap_{n,\bm{t}} \geq 2^{ - n + 1 }\iota \bigr) \\
&\qquad\geq \sum_{\cC \in \sC(\BG_n)}  \int_{\iota + \mu_{\cC} -  2^{-1}  b_{n-1}  }^{ - \iota + \mu_{\cC} +   2^{-1}  b_{n-1}  } \!
         Q_{\cC} \bigl( x+\iota, \, x + \bigl(1 - 2^{-n + 1} \bigr) \iota \bigr) \phi_{\cC}( x + \iota ) \,\mathrm{d}x  \notag \\
&\qquad\geq \sum_{\cC \in \sC(\BG_n)}  \int_{\iota + \mu_{\cC} -  2^{-1}  b_{n-1}}^{- \iota + \mu_{\cC} +  2^{-1}  b_{n-1}  } \!
         \tQ_{\cC} (x) \tphi_{\cC}(x) \,\mathrm{d}x  \notag \\
&\qquad\geq \sum_{\cC \in \sC(\BG_n)}  \P( \cE  \cap \{ \cC \text{ is the cycle with maximal mean-weight} \} ) \\         
&\qquad = \P( \cE ).
\end{align*}
We remark that in the last inequality, we used the fact that $\tphi_{\cC}$ is zero outside of $[ \mu_{\cC} -  2^{-1}  b_{n-1},  \mu_{\cC} +  2^{-1}  b_{n-1}]$ by \eqref{e:Pf_p_conditional_layered_Gap_estimate_Xt_bound}.

As a straightforward consequence of the uniform distribution and the hypothesis that $\epsilon \in (0,1/2)$, we have
\begin{equation*}
\P(\cE) =  (1-2 \epsilon)^{2^{n-1}} \geq 1- 2^n \epsilon ,
\end{equation*}
where the inequality follows from the well-known inequality $1-m\alpha \leq (1-\alpha)^m$ for $m\in\N$ and $\alpha \in (0,1)$ from the concavity of the logarithm. The proposition is therefore established.
\end{proof}

The following proposition brings us a step closer to the gap criterion in Lemma~\ref{l:gap}.

\begin{prop}\label{p:goal}
Let $(\Sigma^+, \sigma )$ be the full shift on binary sequences.
Let $\ba=\{a_{n}\}_{n\in\N_0}$ be a strictly decreasing sequence of positive numbers satisfying $a_{n+1}/a_n = O ( \theta^n)$ as $n \to +\infty$ for some $\theta \in (0,1/2)$. Let $\{b_n\}_{n\in\N_0}$ be a sequence of positive numbers satisfying $b_n = o (a_n)$ as $n \to +\infty$. Consider a weakly $\ba$-admissible gauge $\bb = \{ b_{\bw} \}_{\bw \in \Sigma_*}$ satisfying $b_{\bw}=b_{\abs{\bw}}$ for all $\bw \in \Sigma_*$.

Then there exists a measure $\P_{\bb}$ supported on $\mathcal{H}_{\bb}$ with the property that for every $f_0\in \Lip(\Sigma^{+},d_{\ba})$, we have
\begin{equation*}\label{e:goal}
\mathbb{P}_{\bb} \bigl( \bigl\{ g\in\cH_{\bb} :  \exists  N\in\N , \, \forall n \geq N, \, \Gap_n(f_0+g) \geq 4^{-n} n^{-3} b_{n-1}  \bigr\} \bigr) =1.
\end{equation*}
\end{prop}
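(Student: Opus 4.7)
The plan is to take $\P_{\bb}$ to be the distribution of the random Haar series $g$ from \eqref{e:random_function}, i.e., the product measure $\prod_{\bw \in \Sigma_*} \mu_{\bw}$ where each $\mu_{\bw}$ is the uniform distribution on $[-b_{\bw}, b_{\bw}]$ (equivalently, set $g = \sum_{m \geq 0} \sum_{\bw \in \Sigma_m} b_m Y_{\bw} h_{\bw}$ for an i.i.d.\ family $\{Y_{\bw}\}_{\bw \in \Sigma_*}$ of $[-1,1]$-uniform random variables). Because $|b_{|\bw|} Y_{\bw}| \leq b_{\bw}$ almost surely, Lemma~\ref{l:brick_def} guarantees that $g \in \cH_{\bb}$ a.s., so $\P_{\bb}$ is supported on $\cH_{\bb}$ as required.

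Next, fix $f_0 \in \Lip(\Sigma^+, d_{\ba})$ and an integer $n \geq 2$. The idea is to apply Proposition~\ref{p:conditional_layered_Gap_estimate} with the specific choice
$$\epsilon = \epsilon_n \= 2^{-n} n^{-3} \in (0, 1/2) ,$$
so that $2^{-n} b_{n-1} \epsilon_n = 4^{-n} n^{-3} b_{n-1}$ matches the target threshold exactly. Observe that by formula \eqref{e:def_Wn} the edge-weights $W_n^{f_0+g}(\bw)$ on $\BG_n$ depend only on the Haar coefficients $c_{\bv}(f_0+g)$ with $|\bv| \leq n-1$; consequently $\Gap_n(f_0+g)$ is measurable with respect to the $\sigma$-algebra generated by $\{Y_{\bw} : \bw \in \Sigma_{\leq n-1}\}$. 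Conditioning instead on the coarser collection $\{Y_{\bw} : \bw \in \Sigma_{\leq n-2}\}$, the conditional distribution of $\Gap_n(f_0+g)$ is precisely the law of $\Gap_{n,\bm{t}}$ from Proposition~\ref{p:conditional_layered_Gap_estimate}. Since the bound $2^n \epsilon$ there is \emph{uniform} in $\bm{t}$, the tower property (or Fubini) will give
$$\P_{\bb}\bigl(\Gap_n(f_0+g) \leq 4^{-n} n^{-3} b_{n-1}\bigr) \leq 2^n \epsilon_n = n^{-3} .$$

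The last step is a standard application of the (convergence direction of the) Borel--Cantelli lemma: since $\sum_{n \geq 2} n^{-3} < +\infty$, $\P_{\bb}$-almost surely the event $\bigl\{ \Gap_n(f_0+g) \leq 4^{-n} n^{-3} b_{n-1} \bigr\}$ occurs for only finitely many $n$, so a (random) threshold $N \in \N$ exists with $\Gap_n(f_0+g) > 4^{-n} n^{-3} b_{n-1}$ for all $n \geq N$. This is precisely the conclusion.

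The only mildly delicate step in executing this plan will be the passage from the conditional estimate in Proposition~\ref{p:conditional_layered_Gap_estimate} to an unconditional probability bound; however, because the proposition gives a bound uniform in the conditioning values $\bm{t}$, this reduces to a single application of Fubini, and no independence beyond that built into the product measure $\P_{\bb}$ is used (the Borel--Cantelli step needs only summability). All the genuine work has been absorbed into the conditional gap estimate of Proposition~\ref{p:conditional_layered_Gap_estimate}, so I do not expect any further obstacle here.
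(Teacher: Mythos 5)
Your proposal is correct and follows essentially the same route as the paper: take $\P_{\bb}$ to be the product of uniform measures realized by the random Haar series \eqref{e:random_function}, apply Proposition~\ref{p:conditional_layered_Gap_estimate} with $\epsilon = 2^{-n}n^{-3}$, pass from the conditional bound (uniform in $\bm{t}$) to the unconditional one by integrating over the law of $\{Y_{\bw} : \bw \in \Sigma_{\leq n-2}\}$, and conclude with Borel--Cantelli using $\sum n^{-3} < +\infty$. No gaps.
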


\begin{proof}
Fix an arbitrary integer $n\geq 2$. 

It follows from Proposition~\ref{p:conditional_layered_Gap_estimate} that
\begin{equation*}
\P ( \{ \Gap_{n, \bm{t}} \leq 2^{-n} b_{n-1} \epsilon  \} )
\leq 2^n \epsilon 
\end{equation*}
for all $\epsilon \in (0,1/2)$.

We denote by $\phi_{n-2}$ the joint probability density function for all the random variables in $\{ Y_{\bw}  :  \bw \in \Sigma_{\leq n-2} \}$. Then for each $\epsilon \in (0, 1/2)$,
\begin{align*}
&   \P\{\Gap_{n}(f_0 + g) \leq 2^{-n}b_{n-1}\epsilon\}  \\
&\qquad   \leq \int \! \P\{\Gap_{n,\bm{t}} \leq 2^{-n} b_{n-1} \epsilon\}\phi_{n-2} (\bm{t}) \, \mathrm{d} \bm{t} \\
&\qquad   \leq 2^n \epsilon.
\end{align*}


Taking $\epsilon \= 2^{-n} n^{-3}$, then , we have
\begin{equation*}
 \sum_{n=2}^{+\infty}\P\{\Gap_n (f_0 + g) \leq 4^{-n} n^{-3} b_{n-1}\} 
 \leq \sum_{n=2}^{+\infty}  n^{-3}  
< +\infty.
\end{equation*}
By the Borel--Cantelli lemma, therefore, the proposition follows.
\end{proof}

\section{Proof of Theorem~\ref{t:brick}}   \label{s:Proof_T_Brick}

We are now ready to establish the more concrete formulation Theorem~\ref{t:brick} of our main theorem (Theorem~\ref{t:prevalence}).

\begin{proof}[Proof of Theorem~\ref{t:brick}]
Fix an arbitrary function $f_0\in \Lip(\Sigma^{+},d_{\ba})$. Define
\begin{equation*}
\Lambda_{f_0} 
\= \bigl\{ g\in\cH_{\bb}  :  \exists  N\in\N , \, \forall n \geq N, \, \Gap_n(f_0+g) \geq 4^{-n} n^{-3} b_{n-1}  \bigr\} .
\end{equation*}

Next, for each $g\in\Lambda_{f_0}$, by the definition of $\Lambda_{f_0}$, for all sufficiently large $n \in \N$,
\begin{equation}\label{e:biggap}
\Gap_n(f_0+g) \geq  4^{-n} n^{-3} b_{n-1}  .
\end{equation}
On the other hand, since by the hypothesis on $\ba$ and $\bb$,
\begin{equation}\label{e:summable}
\sum_{k=1}^{ + \infty}ka_k <+\infty,
\end{equation}
and since $\theta \in (0,1/4)$, for all sufficiently large integers $n$, one has
\begin{equation}\label{e:ratio}
  \frac{b_{n-1}}{b_{n}} >  \max\{  \LIP_{d_{\ba}}(f_0), \, 1\} 4^{n+1}  n^4
\end{equation}
and
\begin{equation}\label{e:tail}
  a_n \geq \sum_{k=n+1}^{+\infty}(k-n+1) a_k .
\end{equation}

Combining \eqref{e:biggap}, \eqref{e:ratio}, $b_n = a_n / n$, \eqref{e:tail}, \eqref{e:c_vs_var}, \eqref{e:LIP_vs_var_over_an}, and the triangle inequality, we deduce that for each $N\in\N$, there exists an integer $n \geq N$ such that
\begin{align*}
&  \Gap_n(f_0 +g) \\
&\qquad \geq 4^{-n} n^{-3} b_{n-1}                                        & (\text{by }\eqref{e:biggap}) \\
&\qquad > 4 \max\{  \LIP_{d_{\ba}}(f_0 ), \, 1\} n  b_{n}       & (\text{by }\eqref{e:ratio}) \\
&\qquad = 4 \max\{  \LIP_{d_{\ba}}(f_0 ), \, 1\} a_n              & (\text{by }b_n = a_n / n) \\
&\qquad \geq 2 \max\{  \LIP_{d_{\ba}}(f_0 ), \, 1\} \sum_{k=n}^{+\infty}(k-n+1) a_k & (\text{by }\eqref{e:tail}) \\
&\qquad \geq \sum_{k=n}^{+\infty}(k-n+1) (  a_k \LIP_{d_{\ba}}(f_0 ) + b_k )   & (\text{by }b_k = a_k / k) .
\end{align*}
Note that by  \eqref{e:c_vs_var} and \eqref{e:LIP_vs_var_over_an}, $a_k \LIP_{d_{\ba}}(f_0 )  \geq  \max_{ \bw \in \Sigma_k } \{ \abs{ c_{\bw}(f_0 ) } \}$. On the other hand, since $g\in\Lambda_{f_0}$, by the definition of $\Lambda_{f_0}$ and Lemma~\ref{l:brick_def}, we have $b_k \geq \max_{ \bw \in \Sigma_k } \{ \abs{c_{\bw}(g)} \}$. Hence
\begin{align*}
&  \Gap_n(f_0 +g) \\
&\qquad \geq \sum_{k=n}^{+\infty}(k-n+1) \Bigl( \max_{ \bw \in \Sigma_k } \{ \abs{ c_{\bw}(f_0 ) } \}  +  \max_{ \bw \in \Sigma_k } \{ \abs{c_{\bw}(g)} \} \Bigr)  \\
&\qquad \geq \sum_{k=n}^{+\infty}(k-n+1)  \max_{ \bw \in \Sigma_k } \{ \abs{ c_{\bw}( f_0  + g ) } \}  .
\end{align*}
This means that $f_0 +g$ satisfies the gap criterion \eqref{e:gap} at length $n$. Together with \eqref{e:summable} and Lemma~\ref{l:gap} (on $f_0 +g$), it then follows that $f_0 +g \in \Lock_{\sigma} (\Sigma^{+},d_{\ba})$. 

Hence for each $f_0 \in \Lip(\Sigma^{+},d_{\ba})$ we have
\begin{equation*}
\Lambda_{f_0 }\subseteq \{g\in\mathcal{H}_{\bb}  :  f_0 +g \in \Lock_{\sigma} (\Sigma^{+},d_{\ba})\}.
\end{equation*}
Then it follows from Proposition~\ref{p:goal},
\begin{equation*}
1=\P_{\bb}(\Lambda_{f_0})\leq\P_{\bb}(\{g\in\cH_{\bb}  :  f_0 +g \in \Lock_{\sigma} (\Sigma^{+},d_{\ba})\})\leq 1.
\end{equation*}
The proof of Theorem~\ref{t:brick} is therefore complete.
\end{proof}

\section{Proof of Theorem~\ref{t:prevalence_lip}}      \label{s:gound_states}


Recall the definition of little Lipschitz spaces reviewed in Section~\ref{s:Introduction}.

\begin{proof}[Proof of Theorem~\ref{t:prevalence_lip}]
It suffices to show that $\lock_{\sigma}(\Sigma^{+},d_{\ba})$ is a prevalent set in $\lip(\Sigma^{+},d_{\ba})$.

Set $b_n \= a_n / n$ for each $n\in\N_0$. Consider the weakly $\ba$-admissible gauge $\bb = \{ b_{\bw} \}_{\bw \in \Sigma_*}$ given by $b_{\bw} \= b_{\abs{\bw}}$ for each $\bw \in \Sigma_*$.

We first show that the Hilbert brick $\cH_{\bb}$ defined in \eqref{e:Def_Hb} in Lemma~\ref{l:brick_def} is a subset of $\lip(\Sigma^{+},d_{\ba})$. 

Indeed, for each $\be =\{ e_{\bw} \}_{\bw \in \Sigma_*}$ with $\abs{e_{\bw}} \leq b_{\bw}$ for each $\bw \in \Sigma_*$, by Lemma~\ref{l:brick_def} the uniform limit $f_{\be} \in \Lip(\Sigma^{+},d_{\ba})$ of the Haar series $\sum_{\bw \in \Sigma_*} e_{\bw} h_{\bw}$ satisfies that for each $n \in \N$ sufficiently large and each pair of $\bx,\,\by \in \Sigma^{+}$ with $\bx  \disagree  \by = n$, by \eqref{e:Haar_function}, the hypothesis on $\ba$, and our choice of $b_n$, we have
\begin{equation*}
\abs{  f_{\be}(\bx) - f_{\be}(\by) } 
\leq \sum_{m=n}^{+\infty} b_m
\leq 2 b_n
= 2 a_n / n
= o( a_n ).
\end{equation*}
Thus by the definitions of the metric $d_{\ba}$ and of the little Lipschitz space, we have $f_{\be} \in \lip(\Sigma^{+},d_{\ba})$. It follows from \eqref{e:Def_Hb} that $\cH_{\bb} \subseteq \lip(\Sigma^{+},d_{\ba})$. 

Note that $\lip(\Sigma^{+},d_{\ba}) \subseteq \Lip(\Sigma^{+},d_{\ba})$ and that 
\begin{equation*}
\Lock_{\sigma}(\Sigma^{+},d_{\ba}) \cap \lip(\Sigma^{+},d_{\ba})  \subseteq \lock_{\sigma}(\Sigma^{+},d_{\ba}).
\end{equation*}
Hence by Theorem~\ref{t:brick}, the probability measure $\P_{\bb}$ from Theorem~\ref{t:brick} supported on the compact set $\cH_{\bb}$ satisfies that for each $ f_0 \in  \lip(\Sigma^{+},d_{\ba})$,
\begin{align*}
& \P_{\bb} (  \{ g \in \cH_{\bb}  :   f_0 + g \in \lock_{\sigma} (\Sigma^{+},d_{\ba}) \} ) \\
&\qquad  \geq \P_{\bb} (  \{ g \in \cH_{\bb}  :   f_0 + g \in \Lock_{\sigma}(\Sigma^{+},d_{\ba}) \cap \lip(\Sigma^{+},d_{\ba}) )  \\
&\qquad   =      \P_{\bb} (  \{ g \in \cH_{\bb}  :   f_0 + g \in \Lock_{\sigma}(\Sigma^{+},d_{\ba})  ) =1.
\end{align*}

Therefore, $\lock_{\sigma}(\Sigma^{+},d_{\ba})$ is a prevalent set in $\lip(\Sigma^{+},d_{\ba})$, and the theorem is established.
\end{proof}

As more detailed discussions in \cite[Section~1]{LZ23} suggest, there is a ``dictionary'' for the correspondences between thermodynamic formalism and its tropical counterpart, ergodic optimization. In this dictionary, the existence, uniqueness, and equidistribution of periodic points for the equilibrium state may be considered as the counterparts to the existence, uniqueness, and periodicity of (the support of) the maximizing measure. In the current paper, we investigated in yet another ``language'', namely, a probabilistic one through random maximum mean cycle problems on digraphs using probabilistic tools. Here directed cycles on digraphs translate to periodic orbits, and maximum mean cycles to maximizing measures. Similar analogies can be drawn between these three ``languages'' in the ``dictionary.'' It would be interesting to investigate further into such connections, especially the probabilistic aspects.

\end{document}